\numberwithin{equation}{section}
\numberwithin{figure}{section}
  \theoremstyle{plain}
  \newtheorem{assumption}{\protect\assumptionname}
\theoremstyle{plain}
\newtheorem{thm}{\protect\theoremname}
  \theoremstyle{remark}
  \newtheorem{rem}{\protect\remarkname}
  \theoremstyle{plain}
  \newtheorem{lem}{\protect\lemmaname}
  \theoremstyle{definition}
  \newtheorem{defn}{\protect\definitionname}
  \providecommand{\assumptionname}{Assumption}
  \providecommand{\definitionname}{Definition}
  \providecommand{\lemmaname}{Lemma}
  \providecommand{\remarkname}{Remark}
\providecommand{\theoremname}{Theorem}
\begin{document}

\title{On The Shape of The Free Boundary of Variational Inequalities with
Gradient Constraints}

\author{Mohammad Safdari}
\begin{abstract}
In this paper we derive an estimate on the number of local maxima
of the free boundary of the minimizer of 
\[
I(v):=\int_{U}\frac{1}{2}|Dv|^{2}-\eta v\, dx,
\]
subject to the pointwise gradient constraint 
\[
|Dv|_{p}\le1.
\]
This also gives an estimate on the number of connected components
of the free boundary.%
\thanks{School of Mathematics, Institute for Research in Fundamental Sciences
(IPM), P.O. Box: 19395-5746, Tehran, Iran\protect \\
Email address: safdari@ipm.ir%
}
\end{abstract}

\maketitle

\section{Introduction}

Let $U\subset\mathbb{R}^{2}$ be a simply connected bounded open set
whose boundary is a simple closed Jordan curve consisting of arcs
$S_{1},\cdots,S_{m}$ that are $C^{k,\alpha}$ $(k\ge3\,,\,\alpha>0)$
or analytic up to their endpoints, satisfying Assumption \ref{assu: 1}
below. We denote by $V_{i}:=\bar{S}_{i}\cap\bar{S}_{i+1}$ the vertices
of $\partial U$, and we assume that all the vertices are \textbf{nonreentrant}
corners i.e. their opening angle is less than $\pi$.

Let 
\begin{equation}
I[v]:=\int_{U}\frac{1}{2}|Dv|^{2}-\eta v\, dx,
\end{equation}
with $\eta>0$. Let $u$ be the minimizer of $I$ over 
\begin{equation}
K:=\{v\in H_{0}^{1}(U)\,\mid\,\gamma_{q}(Dv)\leq1\textrm{ a.e. }\}.
\end{equation}
Where $\gamma_{q}$ is the $q$-norm on $\mathbb{R}^{2}$ 
\[
\gamma_{q}((x_{1},x_{2})):=(|x_{1}|^{q}+|x_{2}|^{q})^{\frac{1}{q}}.
\]
As showed in \citet{MR1} we know that $u\ge0$ and it is also the
minimizer of $I$ over 
\begin{equation}
\tilde{K}:=\{v\in H_{0}^{1}(U)\,\mid\, v(x)\leq d_{p}(x,\partial U)\textrm{ a.e. }\}.
\end{equation}
Here $p=\frac{q}{q-1}$ is the dual exponent to $q$, and $d_{p}$
is the metric associated to $\gamma_{p}$. We also assume that $1<q\le2$,
so $2\le p<\infty$.

When $p=q=2$, in which case $\gamma_{2}$ is the Euclidean norm,
the above problem is the famous elastic-plastic torsion problem. The
regularity and the shape of the free boundary of the elastic-plastic
torsion problem is studied by \citet{MR0412940,MR0521411}, \citet{MR534111},
\citet{MR552267}, and \citet{MR563207}.

In \citet{MR1,Safdari20151}, we extended some of their results to
the more general problem explained above. In this work, we continue
this study and generalize some other parts of the above works. Especially,
we extend the results in \citet{MR552267}, and some of the reflection
methods in \citet{MR563207}.

A motivation for our study was to fill the gap between the known regularity
results mentioned above, and the still open question of regularity
of the minimizer of some convex functionals subject to gradient constraints
arising in random surfaces. To learn about the latter, see the work
of \citet{MR2605868}.

Let us summarize here some of the results proved in \citet{MR1,Safdari20151}.
It has been proved that $u\in C_{\textrm{loc}}^{1,1}(U)=W_{\textrm{loc}}^{2,\infty}(U)$.
Also, we have the equalities 
\[
E:=\{x\in U\,\mid\, u(x)<d_{p}(x,\partial U)\}=\{x\in U\,\mid\,\gamma_{q}(Du(x))<1\},
\]
and 
\[
P:=\{x\in U\,\mid\, u(x)=d_{p}(x,\partial U)\}=\{x\in U\,\mid\,\gamma_{q}(Du(x))=1\}.
\]
The first region is called the \textbf{elastic} region and the second
one is called the \textbf{plastic} region. It is easy to see that
if $x\in P$ and $y\in\partial U$ is one of the $p$-closest points
to $x$ on the boundary, then the segment between $x$ and $y$ (which
is obviously in $U$) lies inside $P$. In addition, we have $\Delta u=-\eta$
over $E$,%
\footnote{This implies that $u>0$ in $U$, since $u>0$ in $P$, and by the
strong maximum principle $u>0$ in $E$ too. Note that $u$ can not
vanish identically over $E$ due to the equation $\Delta u=-\eta\ne0$.%
} and $\Delta u\ge-\eta$ a.e. over $U$.

The complement of the largest open set over which $d_{p}(x):=d_{p}(x,\partial U)$
is $C^{1,1}$, is called the \textbf{$p$-ridge} and is denoted by
$R_{p}$. It has been shown that $R_{p}$ consists of those points
in $U$ with more than one $p$-closest point on $\partial U$, and
those other points $x$ at which $d_{p}(x)=\frac{1}{\kappa_{p}(y)}$
(we define $\kappa_{p}(y)$ below). One nice property of the $p$-ridge
is that the $p$-closest point on $\partial U$ varies continuously
in $\bar{U}-R_{p}$. Also, $R_{p}\subset E$, and outside $R_{p}$,
$d_{p}$ is as smooth as $\partial U$, provided that $\partial U$
satisfies 
\begin{assumption}
\label{assu: 1}We assume that at the points where the normal to one
of the $S_{i}$'s is parallel to one of the coordinate axes, the curvature
of $S_{i}$ is small. In the sense that, if we have $(s+a_{0},b(s))$
as a nondegenerate $C^{k,\alpha}$ $(k\geq3\,,\;0<\alpha<1)$ parametrization
of $S_{i}$ around $y_{0}:=(a_{0},b(0))$, and $b'(0)=0$; then we
assume $b'$ goes fast enough to $0$ so that $b'(s)=c(s)|c(s)|^{p-2}$,
where $c(0)=0$, and $c$ is $C^{k-1,\alpha}$. Note that $y_{0}$
can be one of the endpoints of $S_{i}$.

Also we require $c'(0)$ to be small enough so that $1-c'(0)d_{p}(\cdot)$
does not vanish at the points inside $U$ that have $y_{0}$ as the
only $p\hspace{1bp}$-closest point on $\partial U$.
\end{assumption}

It is easy to show that there is a $p$-circle inside $U$ that touches
$\partial U$ only at $y_{0}$ (see the proof of Theorem \ref{thm: ridge is inside}
below). We will call these points the degenerate points of Assumption
\ref{assu: 1}. Note that we modified this assumption to be slightly
different than what appeared in \citet{Safdari20151}, to emphasize
that we require this assumption to also hold at the endpoints of the
arcs $S_{1},\cdots,S_{m}$. 

Away from the $p$-ridge we have 
\begin{equation}
\Delta d_{p}(x)=\frac{-(p-1)\tau_{p}(y)\kappa_{p}(y)}{1-\kappa_{p}(y)d_{p}(x)}.\label{eq: Lap d_p}
\end{equation}
Here $y\in\partial U$ is the $p$-closest point to $x$, and if $(a(\cdot),b(\cdot))$
is a parametrization of $\partial U$ around $y$, 
\[
\kappa_{p}:=\frac{a'b''-b'a''}{(p-1)|a'|^{\frac{p-2}{p-1}}|b'|^{\frac{p-2}{p-1}}(|a'|^{\frac{p}{p-1}}+|b'|^{\frac{p}{p-1}})^{\frac{p+1}{p}}}
\]
is the \textbf{$p$-curvature}, and
\[
\tau_{p}:=\frac{(|a'|^{\frac{2}{p-1}}+|b'|^{\frac{2}{p-1}})|a'b'|^{\frac{p-2}{p-1}}}{(|a'|^{\frac{p}{p-1}}+|b'|^{\frac{p}{p-1}})^{\frac{2p-2}{p}}}
\]
is another reparametrization invariant quantity. Note that at the
degenerate points of Assumption \ref{assu: 1}, we have%
\footnote{If we can approach them with nondegenerate points.%
} $\lim\kappa_{p}=c'(0)$ and $\tau_{p}=0$. Let us also record here
that outside $R_{p}$, $y$ is a $C^{k-1,\alpha}$ function of $x$,
and 
\begin{equation}
Dd_{p}(x)=\frac{\nu(y)}{\gamma_{q}(\nu(y))},\label{eq: Dd_p}
\end{equation}
where $\nu$ is the inward normal to $\partial U$. Note that nonreentrant
corners can not be the $p$-closest point on $\partial U$ to any
point inside $U$.

Let $y=f(s)$ $(0\le s\le L)$ be a parametrization of $\partial U$.
Then it has been proved that the \textbf{free boundary}, $\Gamma:=\partial E\cap U$,
can be parametrized by $f(s)+\delta(s)\mu(s)$. Here $\delta:[0,L]\to\mathbb{R}$
is a continuous and nonnegative function, and $\mu(s)$ is the unique
direction at $f(s)$ along which points inside $U$ have $f(s)$ as
the $p$-closest point on $\partial U$. $\mu$ is called the inward
\textbf{$p$-normal}, and is given by the formula 
\begin{equation}
\mu:=\frac{1}{(|\nu_{1}|^{\frac{p}{p-1}}+|\nu_{2}|^{\frac{p}{p-1}})^{\frac{1}{p}}}(\textrm{sgn}(\nu_{1})|\nu_{1}|^{\frac{1}{p-1}},\textrm{sgn}(\nu_{2})|\nu_{2}|^{\frac{1}{p-1}}),\label{eq: p-normal}
\end{equation}
where as before $\nu=(\nu_{1},\nu_{2})$ is the inward normal to $\partial U$
at $f(s)$. Furthermore, we know that $\Gamma$ is a smooth curve
with no cusp as smooth as the tangent bundle of $\partial U$. Also,
$\delta\equiv0$ in a neighborhood of nonreentrant corners, since
it has been shown that nonreentrant corners have an elastic neighborhood
in $U$. Note that on $\Gamma$ we have $u=d_{p}$ and $Du=Dd_{p}$.

Also note that the above characterization of the free boundary implies
that $E$ is a simply connected domain bounded by a simple closed
Jordan curve.

Let us give a global regularity result not mentioned in \citet{MR1,Safdari20151}.
\begin{thm}
\label{thm: regularity}When all the vertices of $\partial U$ are
nonreentrant corners, we have $u\in C^{1,\alpha}(\bar{U})$ for some
$\alpha>0$. If $\partial U$ has no corners, the conclusion holds
for all $\alpha\in(0,1)$.\end{thm}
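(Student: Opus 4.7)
The plan is to establish boundary regularity piecewise along $\partial U$ and combine it with the known interior $C^{1,1}_{\mathrm{loc}}$ estimate. Continuity of $u$ on $\bar U$ with $u|_{\partial U}\equiv 0$ is immediate from $0\le u\le d_p(\cdot,\partial U)$, so the real task is the Hölder modulus of $Du$ up to $\partial U$.

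Away from corners I would localize near an arbitrary $y_0\in\partial U$ in two sub-cases. If $y_0\in\overline{P}$, i.e.\ the plastic region touches $\partial U$ at $y_0$, then a small half-neighborhood of $y_0$ in $\bar U$ lies in $\bar P$, which is disjoint from $R_p\subset E$; since $d_p$ is $C^{k,\alpha}$ off $R_p$ and $u\equiv d_p$ on $P$, this yields $u\in C^{k,\alpha}$ up to $\partial U$ locally. If instead $y_0$ lies on a portion of $\partial U$ touched by the elastic region, then on a small half-disk $u$ solves $\Delta u=-\eta$ with $u=0$ on $B_r(y_0)\cap\partial U$, and classical boundary Schauder estimates for the Poisson equation in a $C^{k,\alpha}$ half-domain give $u\in C^{k,\alpha}$ up to that arc.

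The delicate regime is a nonreentrant corner $y_0=V_i$ of opening angle $\omega_i<\pi$. Using the quoted prior result that such a corner admits an entirely elastic neighborhood in $U$, on a small ball $B=B_r(y_0)$ the function $u$ satisfies $\Delta u=-\eta$ in $B\cap U$ with $u=0$ on the two $C^{k,\alpha}$ arcs meeting at $y_0$. I would invoke the classical Kondrat\textquoteright ev--Grisvard regularity theory for the Dirichlet Laplacian in corner domains: $u$ decomposes as a $C^{2,\alpha}$ remainder plus singular profiles proportional to $r^{j\pi/\omega_i}\sin(j\pi\theta/\omega_i)$, and because $\omega_i<\pi$ the leading exponent $\pi/\omega_i$ exceeds $1$, which gives $u\in C^{1,\alpha_i}(\bar B\cap\bar U)$ with $\alpha_i=\min(\pi/\omega_i-1,1)>0$.

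Taking the minimum over the finitely many corners and arcs combines the three local estimates into $u\in C^{1,\alpha}(\bar U)$ with $\alpha=\min_i\alpha_i>0$. In the corner-free case only the two non-singular regimes occur, yielding $C^{k,\alpha}$ bounds in a neighborhood of $\partial U$; gluing these with the interior $C^{1,1}_{\mathrm{loc}}$ estimate gives $u\in C^{1,\alpha}(\bar U)$ for every $\alpha\in(0,1)$. The main obstacle I anticipate is the matching across the free boundary $\Gamma$, in particular at points where $\Gamma$ meets $\partial U$ (the endpoints of the intervals on which $\delta$ vanishes): however, the identities $u=d_p$ and $Du=Dd_p$ on $\Gamma$, together with the smoothness of $d_p$ off $R_p$ and the $C^{1,1}_{\mathrm{loc}}$ estimate, render the elastic and plastic side estimates compatible in $C^{1,\alpha}$ across $\Gamma$ without loss of exponent.
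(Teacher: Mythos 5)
Your corner analysis is essentially the paper's: you use the quoted fact that nonreentrant corners have an elastic neighborhood, so near $V_i$ one has $\Delta u=-\eta$ with zero Dirichlet data on two smooth arcs meeting at an angle $<\pi$, and corner-domain regularity (Kondrat'ev--Grisvard, or the reference the paper cites) gives $C^{1,\alpha}$ there. The genuine gap is in your treatment of the boundary away from the corners. Your dichotomy --- either a half-neighborhood of $y_0$ lies in $\bar P$, or $u$ solves $\Delta u=-\eta$ on a full half-disk at $y_0$ --- does not cover the boundary points where the free boundary meets $\partial U$ (points where $\delta$ vanishes but is positive arbitrarily nearby, including possible accumulation points of plastic components). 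Near such a point every half-disk contains both elastic and plastic parts, so neither the boundary Schauder argument nor the identity $u\equiv d_p$ applies, and your closing paragraph asserting that the two one-sided estimates are ``compatible in $C^{1,\alpha}$ across $\Gamma$ without loss of exponent'' is a claim, not an argument; moreover the interior $C^{1,1}_{\mathrm{loc}}$ bound cannot be used to glue there, because its constants degenerate as one approaches $\partial U$.

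The paper's proof avoids any elastic/plastic case distinction at the boundary by first showing that $\Delta u$ is bounded on all of $U$: in $E$ it equals $-\eta$, and in $P$ it equals $\Delta d_p$, which is bounded because Assumption \ref{assu: 1} makes $\kappa_p$ and $\tau_p\kappa_p$ bounded on $\partial U$ and forces $1-\kappa_p d_p$ to have a positive minimum on $P$ (see (\ref{eq: Lap d_p})). With a bounded right-hand side and $u\in C^{0,1}(\bar U)$ vanishing on $\partial U$, the Calder\'on--Zygmund $W^{2,s}$ estimates up to the smooth portions of $\partial U$ (for every $s<\infty$) give $u\in C^{1,\alpha}$ near every interior point of each $S_i$ for every $\alpha\in(0,1)$, uniformly in whether the point is elastic, plastic, or of mixed type. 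If you want to salvage your route, you must either add this global bound on $\Delta u$ and switch to $L^s$ theory at the mixed boundary points, or supply an actual matching argument across $\Gamma$ up to $\partial U$; as written, the proposal does not prove the theorem at those points.
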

\begin{proof}
Note that by the gradient constraint we have $u\in W^{1,\infty}(U)=C^{0,1}(\bar{U})$.
Furthermore 
\[
\Delta u=\begin{cases}
-\eta & \textrm{ in }E\\
\Delta d_{p} & \textrm{ a.e. in }P.
\end{cases}
\]
Also note that by Assumption \ref{assu: 1}, $\kappa_{p}$ is bounded
on $\partial U$. Thus $1-\kappa_{p}d_{p}\to1$ uniformly, as we approach
$\partial U$. Also, $1-\kappa_{p}d_{p}>0$ on $P$,%
\footnote{If $1-\kappa_{p}d_{p}<0$ at some point in $P$, moving toward $\partial U$
along the segment that connects that point to its $p$-closest point
on $\partial U$, we find a point in $P$ at which $1-\kappa_{p}d_{p}=0$,
as $d_{p}$ changes linearly along that segment.%
} so it has a positive minimum there. In addition, $\tau_{p}\kappa_{p}$
is bounded on $\partial U$ as we assumed that $S_{i}$'s are smooth
up to their endpoints. Hence $\Delta d_{p}$ is bounded on $P$.
Thus $\Delta u$ is bounded there too. Therefore as $u\in C^{0}(\bar{U})$,
we can apply the Calderon-Zygmund estimate and conclude that $u$
is in $W^{2,s}$ for any $s\in(1,\infty)$, around any $C^{1,1}$
portion of $\partial U$. Thus $u$ is in $C^{1,\alpha}$ around points
in the interior of $S_{i}$'s, for any $\alpha\in(0,1)$. (Consult
Theorem 9.15 in \citet{MR1814364}. Note that we need to multiply
$u$ by a smooth bump function with support around some smooth part
of $\partial U$, and use the fact that $u,Du,\Delta u$ are bounded.)
As nonreentrant corners have an elastic neighborhood in $U$, around
them we have $\Delta u=-\eta$. Now as $u$ vanishes on $\partial U$,
we can apply the results in \citet{MR576102} to deduce that $u$
is in $C^{1,\alpha}$ for some $\alpha>0$ around these corners.
\end{proof}
Let us also give an interesting consequence of Assumption \ref{assu: 1}. 
\begin{thm}
\label{thm: ridge is inside}Every smooth point of $\partial U$ has
a $U$-neighborhood that does not intersect $R_{p}$. \end{thm}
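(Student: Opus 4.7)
The plan is to build a neighborhood of $y_{0}$ in $\bar{U}$ by a tubular-type local diffeomorphism, then to verify that its intersection with $U$ avoids both singular mechanisms that define $R_{p}$: multiplicity of the $p$-closest point, and the vanishing of $1-\kappa_{p}d_{p}$.

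First, I would parametrize $\partial U$ near $y_{0}$ as $f(s)=(a(s),b(s))$ with $f(s_{0})=y_{0}$. If the normal at $y_{0}$ is not axis-parallel, any $C^{k,\alpha}$ parametrization works and the formula \eqref{eq: p-normal} delivers a $C^{k-1,\alpha}$ map $s\mapsto\mu(s)$. If $y_{0}$ is a degenerate point of Assumption \ref{assu: 1}, I would instead use the parametrization $(s+a_{0},b(s))$ with $b'(s)=c(s)|c(s)|^{p-2}$; a direct calculation then gives $\mu(s)=(-c(s),1)/(1+|c(s)|^{p})^{1/p}$ and $\kappa_{p}(s)=c'(s)/(1+|c(s)|^{p})^{(p+1)/p}$, both $C^{k-1,\alpha}$ and finite at $s_{0}$ despite the vanishing of $\nu_{1}$ there.

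Next set $\Phi(s,t):=f(s)+t\mu(s)$. At $(s_{0},0)$ its Jacobian has columns $f'(s_{0})$ and $\mu(s_{0})$, and these are linearly independent because $\mu\cdot\nu>0$ is immediate from \eqref{eq: p-normal}. So $\Phi$ is a local $C^{k-1,\alpha}$ diffeomorphism, and I would pick a neighborhood $V$ of $(s_{0},0)$ in $\{t\ge0\}$ and set $W:=\Phi(V)$, a neighborhood of $y_{0}$ in $\bar{U}$ (the image for $t>0$ lies in $U$ since $\mu(s_{0})$ points inward). I would shrink $V$ so that (i) $1-\kappa_{p}(s)\,t>0$ on $V$---possible because $\kappa_{p}(s_{0})$ is finite (and the second clause of Assumption \ref{assu: 1} applies in the degenerate case), while $t$ can be made arbitrarily small---and (ii) the $p$-diameter of $W$ is small enough that any point of $W$ must have its $p$-closest boundary points inside the parametrized arc. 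Then for each $x=\Phi(s,t)\in W\cap U$, $f(s)$ is a $p$-closest point by the defining property of $\mu$ together with $t<1/\kappa_{p}(s)$; and for any other $p$-closest $y^{*}\in\partial U$, the segment from $y^{*}$ to $x$ runs along the inward $p$-normal at $y^{*}$, so $x=y^{*}+t\mu(y^{*})$, $(y^{*},t)\in V$ by (ii), and injectivity of $\Phi|_{V}$ forces $y^{*}=f(s)$. Combined with (i), this shows $x\notin R_{p}$, so $W\cap U$ is the desired neighborhood.

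The main obstacle is the degenerate case. There the raw formula \eqref{eq: p-normal} looks singular because a component of $\nu$ vanishes, and the formula for $\kappa_{p}$ becomes $0/0$. The specific reparametrization $b'(s)=c(s)|c(s)|^{p-2}$ supplied by Assumption \ref{assu: 1} is precisely the device that restores smoothness of $\mu$ and $\kappa_{p}$ as functions of $s$ with finite values at $s_{0}$, thereby preserving both the local diffeomorphism property of $\Phi$ and the inequality $1-\kappa_{p}d_{p}>0$ that keeps $W\cap U$ outside $R_{p}$.
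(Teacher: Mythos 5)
Your argument is correct, but it reaches the conclusion by a different mechanism than the paper's proof. You treat $\Phi(s,t):=f(s)+t\mu(s)$ as a $p$-normal exponential map, obtain local injectivity from the inverse function theorem (with the reparametrization $b'=c|c|^{p-2}$ of Assumption \ref{assu: 1} restoring $C^{1}$ dependence of $\mu$ and $\kappa_{p}$ at the degenerate points, exactly as you say), and then invoke the previously established fact that any $p$-closest point $y^{*}$ of $x\in U$ satisfies $x=y^{*}+d_{p}(x)\mu(y^{*})$; since all candidate $p$-closest points of a point of $W$ are forced into the chart, injectivity identifies them, giving uniqueness together with $d_{p}(x)=t<1/\kappa_{p}$. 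The paper instead proves a \emph{uniform interior $p$-circle property} by an explicit barrier computation (the monotonicity of $\beta$ and positivity of $\alpha$), which yields uniqueness of the $p$-closest point near $y_{0}$ without using the normal-direction characterization, and then uses boundedness of $\kappa_{p}$ for the focal condition. Your route is shorter and more conceptual, but it leans on the quoted direction property of $\mu$ from the earlier papers and on $C^{1}$ regularity of $s\mapsto\mu(s)$; the paper's route is more self-contained at the level of boundary geometry and actually exhibits the interior $p$-circle, which is also cited elsewhere (e.g.\ in the remark following Assumption \ref{assu: 1}). Two small points to tighten: the justification ``$f(s)$ is a $p$-closest point by the defining property of $\mu$'' has the implication backwards --- the defining property says that points having $f(s)$ as $p$-closest point lie along $\mu(s)$, not conversely --- but this intermediate claim is redundant, since your injectivity step applied to an arbitrary $p$-closest point $y^{*}$ already yields $y^{*}=f(s)$ and $d_{p}(x)=t$; and the choices should be ordered (first fix $V$ on which $\Phi$ is injective and $1-\kappa_{p}(s)t>0$, then shrink the neighborhood of $y_{0}$ inside $\Phi(V)$ so that the parameters of all $p$-closest points of its points land in $V$, using that $\partial U$ minus the local arc has positive distance from $y_{0}$) to avoid the mild circularity of shrinking $V$ in order to control $\Phi(V)$.
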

\begin{proof}
The reason is that, locally around smooth points, $\partial U$ has
\textit{uniform interior $p$-circle property}. This means that for
any smooth point $y_{0}\in\partial U$ and any $y\in\partial U$ close
enough to $y_{0}$, there is a $p$-circle inside $U$ whose boundary
touches $\partial U$ only at $y$, and its $p$-radius is independent
of $y$. This implies that close to $y_{0}$, no point of $U$ has
more than one $p$-closest point on $\partial U$. Also, as $\kappa_{p}$
is bounded on $\partial U$ by Assumption \ref{assu: 1} and smoothness
of $S_{i}$'s up to their endpoints, $1-\kappa_{p}d_{p}\ne0$ near
the boundary. Thus we get the result.

To prove the property, first assume that $y_{0}=(a_{0},b_{0})$ is
a degenerate point of Assumption \ref{assu: 1}, and around it we
can parametrize $\partial U$ by 
\[
s\mapsto(s+a_{0},b(s)).
\]
Where $b(0)=b_{0}$, $b'(0)=0$, and $b(s)=c(s)|c(s)|^{p-2}$ for
some smooth enough function $c$. We assume that $U$ is above $\partial U$
around $y_{0}$. 

Let $s_{1}$ be close to $0$, and consider $y_{1}=(s_{1}+a_{0},b(s_{1}))$
near $y_{0}$. Then, $\frac{(-c(s_{1}),1)}{(1+|c(s_{1})|^{p})^{\frac{1}{p}}}$
is the $p$-normal at $y_{1}$. Consider the $p$-circle with $p$-radius
$r$ and center $(a_{1}+a_{0},b_{1})$, where $a_{1}:=s_{1}-\frac{rc(s_{1})}{(1+|c(s_{1})|^{p})^{\frac{1}{p}}}$
and $b_{1}:=b(s_{1})+\frac{r}{(1+|c(s_{1})|^{p})^{\frac{1}{p}}}$.
We will show that this $p$-circle which passes through $y_{1}$,
is above $\partial U$ near $y_{1}$. Let 
\[
\alpha(s):=-(r^{p}-|s-a_{1}|^{p})^{\frac{1}{p}}+b_{1}-b(s).
\]
It is enough to show that $\alpha$ is positive around $s_{1}$. Note
that $\alpha(s_{1})=0$.

For this to happen, it suffices to show that 
\[
\alpha'(s)=\frac{(s-a_{1})|s-a_{1}|^{p-2}}{(r^{p}-|s-a_{1}|^{p})^{\frac{p-1}{p}}}-c(s)|c(s)|^{p-2}
\]
is positive after $s_{1}$ and negative before it. But as the map
$s\mapsto s|s|^{p-2}$ is increasing, we just need to show that 
\[
\beta(s):=\frac{s-a_{1}}{(r^{p}-|s-a_{1}|^{p})^{\frac{1}{p}}}-c(s)
\]
has the same property. As $\beta(s_{1})=0$, it is sufficient to show
that 
\[
\beta'(s)=\frac{1}{(r^{p}-|s-a_{1}|^{p})^{\frac{1}{p}}}+\frac{|s-a_{1}|^{p}}{(r^{p}-|s-a_{1}|^{p})^{\frac{p+1}{p}}}-c'(s)
\]
is positive. 

Choose $r$ small enough so that $c'(s)<\frac{1}{2r}$ for $|s|\le2\epsilon_{0}$,
where $\epsilon_{0}$ is very small compared to $r$. Then for any
$s_{1}$ with $|s_{1}|<\epsilon_{0}$, we have $\beta'(s)>\frac{1}{2r}$
for $|s-s_{1}|\le\epsilon_{0}$. Thus $\alpha(s)>0$ for $0<|s-s_{1}|\le\epsilon_{0}$.
Now inside the $p$-circle with $p$-radius $r$, we take a $p$-circle
with $p$-radius $r_{1}$ that passes through $y_{1}$. Let $|s_{1}|\le\frac{1}{4}\epsilon_{0}$.
We can take $r_{1}$ to be small enough (independently from $y_{1}$),
so that this smaller $p$-circle has a positive distance from $\partial U-\{(s+a_{0},b(s))\,\mid\,|s|<\frac{1}{2}\epsilon_{0}\}$.
Hence the smaller $p$-circle is inside $U$, and this is what we
wanted to prove.

Now assume that $y_{0}$ is a nondegenerate point. Then due to the
inverse function theorem, we can find a parametrization for $\partial U$
around $y_{0}$ of the form
\[
s\mapsto(s+a_{0},b(s)).
\]
This time $b'(s)\ne0$ for $s$ small, so we can define the smooth
function $c(s):=\frac{b'(s)}{|b'(s)|}|b'(s)|^{\frac{1}{p-1}}$. Hence
$b'=c|c|^{p-2}$ and we can repeat the above argument.\end{proof}
\begin{rem}
When $p\ne2$, this theorem is false without Assumption \ref{assu: 1}.
A simple example is a disk, whose $p$-ridge is the union of its two
diagonals parallel to the coordinate axes.
\end{rem}

\begin{rem}
An important consequence of this theorem is that $d_{p}$ is at least
$C^{1}$ up to smooth points of $\partial U$. The reason is that
$d_{p},Dd_{p}$ are uniformly continuous on a $U$-neighborhood of
these points. 
\end{rem}
Now let us briefly comment on the case that some vertices $V_{i}$
are \textbf{reentrant} corners, i.e. their opening angle is greater
than $\pi$. The main difference that these corners have with nonreentrant
ones, is that they are the $p$-closest point on $\partial U$ to
some points inside $U$. In fact, if we denote by $\mu_{i1},\mu_{i2}$
the inward $p$-normals to respectively $S_{i},S_{i+1}$ at $V_{i}$,
then the points in $U$ between $\mu_{i1},\mu_{i2}$ and close to
$V_{i}$ have $V_{i}$ as the only $p$-closest point on $\partial U$.
We denote this set of points by $U_{i}$. Note that $U_{i}$ is an
open subset of $U$.

It is obvious that $d_{p}$ is analytic on $U_{i}$. The $p$-ridge
is characterized as before, and is inside the elastic region. The
other difference is that on $\mu_{ij}$, we can only say that $d_{p}$
is $C^{1,1}$ at the points where $d_{p}\ne\frac{1}{\kappa_{p}}$.
Furthermore, the free boundary is an analytic curve inside $U_{i}$.
Here, $\delta$ is a function of the angle between $\mu_{i1}$ and
the segment connecting $V_{i}$ to the free boundary. Also, Theorem
\ref{thm: regularity} does not hold when some of the vertices are
reentrant corners. Although $Du$ remains bounded as we approach a
reentrant corner, it is not necessarily $C^{1}$ there.

\section{Flat Boundaries}

We start with a lemma about the level sets of a function satisfying
an elliptic equation in some region of the plane.
\begin{lem}
\label{lem: level sets}Let $U\subset\mathbb{R}^{2}$ be a bounded
simply connected domain whose boundary is a simple closed Jordan curve.
Suppose $u\in C^{2}(U)\cap C(\bar{U})$ is a nonconstant function
satisfying 
\[
Lu:=-a^{ij}D_{ij}^{2}u+b^{i}D_{i}u=0.
\]
Where $L$ is a uniformly elliptic operator with continuous coefficients.
Then the closure of every level set of $u$ in $U$, intersects $\partial U$. 

Furthermore, when $L=-\Delta$, the closure of every connected component
of any level set of $u$ in $U$, intersects $\partial U$.\end{lem}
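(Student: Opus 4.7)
The plan is to argue by contradiction. Suppose $F:=\{u=c\}\cap U$ is non-empty yet $\bar F\cap\partial U=\emptyset$; I will combine the maximum principle with a connectedness argument on the Jordan domain $\bar U$ to obtain a contradiction. First I would observe, using the strong maximum and minimum principles for the uniformly elliptic operator $L$ together with $u$ being non-constant, that both $V_{+}:=\{u>c\}\cap U$ and $V_{-}:=\{u<c\}\cap U$ are non-empty: otherwise a point of $F$ would be an interior extremum of $u$, forcing $u\equiv c$.

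The key sub-claim to establish next is that every connected component $W$ of $V_{\pm}$ has a limit point on $\partial U$. Assuming $\bar W\cap\partial U=\emptyset$, the closure $\bar W$ would be a compact subset of $U$ with $\partial W\subset F$, so that $u=c$ on $\partial W$; the maximum principle for $L$ on $\bar W$ would then force $u\equiv c$ there, contradicting the strict inequality inside $W$. Consequently, every connected component of $\bar U\setminus F$ must meet $\partial U$, and since $\partial U$ is itself connected, $\bar U\setminus F$ is a single connected open set. Setting $\hat V_{\pm}:=\{\pm(u-c)>0\}\cap\bar U$ and $P:=\{u=c\}\cap\partial U$, I would write $\bar U\setminus F=\hat V_{+}\sqcup\hat V_{-}\sqcup P$; connectedness of $\bar U\setminus F$ together with the non-emptiness of $\hat V_{\pm}$ forces $P\neq\emptyset$, and a final elementary argument along $\partial U$ (using that $\hat V_{\pm}\cap\partial U$ and $P$ partition the Jordan curve) identifies a point of $P$ as a limit of $F$-points, yielding $\bar F\cap\partial U\neq\emptyset$.

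For the sharper statement about connected components when $L=-\Delta$, I would exploit the real-analyticity of harmonic functions on the simply connected domain $U$. Each connected component $C$ of $\{u=c\}\cap U$ is then a one-dimensional real-analytic set; near any $p\in C$, $u-c$ is locally the real part of a holomorphic function, so the local level set takes the form of $2k$ analytic arcs meeting at $p$ (where $k\ge 1$ is the order of vanishing of $u-c$ at $p$). In particular no point of $C$ is a free endpoint, and every vertex of $C$ has local degree at least $2$. Hence, if $C$ were compactly contained in $U$, it would be a finite graph in which every vertex has degree at least $2$, and would therefore necessarily contain a Jordan sub-curve $\Gamma$. The maximum principle on the region bounded by $\Gamma$ would then force $u\equiv c$ there, and unique continuation for $\Delta$ would propagate $u\equiv c$ to all of $U$, contradicting the non-constancy of $u$.

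The main obstacle I foresee is the step in the first part from the easily obtained fact $P\neq\emptyset$ to the stronger conclusion $\bar F\cap\partial U\neq\emptyset$: one must rule out the scenario in which $\{u=c\}$ is attained on $\partial U$ at points that are not approached by any interior level set points. Since no boundary regularity of $\partial U$ is assumed (so the Hopf lemma is unavailable), this requires a careful topological argument that combines the local structure of $V_{\pm}$ near each $p\in P$ with the Jordan structure of $\partial U$.
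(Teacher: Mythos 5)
Your first part stops exactly where the real work begins, and you say so yourself: the passage from $P:=\{u=c\}\cap\partial U\neq\emptyset$ to $\bar F\cap\partial U\neq\emptyset$ is not a routine ``elementary argument along $\partial U$''. Indeed the local strategy you sketch (examine $\hat V_{\pm}$ near a point of $P$ and produce nearby interior level points) cannot work as stated: for $u=\mathrm{Re}(1-z)$ on the unit disk and $c=0$ one has $P=\{(1,0)\}\neq\emptyset$ while no interior point of the level set approaches it (here $F=\emptyset$, so this is not a counterexample to the lemma, but it shows that a boundary zero of $u-c$ forces nothing locally about interior zeros). So the missing step is necessarily global, and it is precisely the heart of the lemma. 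The paper closes it differently and without ever needing $P$: with $2\varepsilon:=\mathrm{dist}(\bar F,\partial U)>0$, for each $y\in\partial U$ let $U_{\varepsilon}(y)$ be the component of $B_{\varepsilon}(y)\cap U$ having $y$ on its boundary (unique because $\partial U$ is a Jordan curve); $u-c$ has one sign on each $U_{\varepsilon}(y)$, the set of $y$ for which the sign is negative is open and closed in the connected curve $\partial U$, hence the sign is the same for all $y$; this gives, say, $u\le c$ on all of $\partial U$, and then the strong maximum principle applied at an interior point of $F$ (an interior maximum of $u$) forces $u\equiv c$, a contradiction. Your preparatory steps (nonemptiness of $V_{\pm}$, every component of $V_{\pm}$ clustering on $\partial U$) are correct and could be combined with a similar boundary open/closed argument to finish, but as written the decisive step is absent, and the route you propose for it would fail.

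Your second part, by contrast, is essentially sound and genuinely different from the paper's. You locate a cycle inside the compact component $C$ itself: since the critical points of the nonconstant harmonic $u$ are isolated (zeros of the holomorphic derivative), $C$ carries only finitely many of them, and the local structure ($2k$ arcs at each point, no free ends) makes $C$ a finite graph with all degrees at least $2$, hence containing a simple closed curve $\Gamma$; simple connectedness of $U$ puts the interior of $\Gamma$ inside $U$, the maximum principle gives $u\equiv c$ there, and real-analytic unique continuation gives $u\equiv c$ on $U$. The paper instead shows $C$ has positive distance from the rest of the level set, encloses it by a Jordan curve in $U$ disjoint from the level set, and reruns the first-part argument in the enclosed domain. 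Your graph-theoretic route buys a more self-contained use of the level-set geometry, at the price of having to justify carefully the finite-graph structure (finitely many vertices, edge ends converging to vertices); the paper's route reuses its first-part machinery and avoids that bookkeeping.
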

\begin{proof}
Let $S:=\{x\in U\,\mid\, u(x)=c\}$ be a nonempty level set, and suppose
to the contrary that $\bar{S}\subset U$. Then as both $\bar{S}$
and $\partial U$ are compact, their distance, $2\varepsilon$, is
positive. For any $y\in\partial U$, let $U_{\varepsilon}(y)$ be
the connected component of $B_{\varepsilon}(y)\cap U$ that has $y$
on its boundary. First, note that there is at most one such component
since $\partial U$ is a simple Jordan curve. Second, on any $U_{\varepsilon}(y)$,
$u$ is either greater than $c$ or less than $c$. The reason is
that if both happen, $u$ must take the value $c$ in $U_{\varepsilon}(y)$
which is impossible.

Now suppose that for some $y_{0}\in\partial U$ we have $u<c$ on
$U_{\varepsilon}(y_{0})$. We claim that the same thing happens for
every $y\in\partial U$. Let 
\[
A:=\{y\in\partial U\,\mid\, u<c\textrm{ on }U_{\varepsilon}(y)\}.
\]
Obviously $A$ is open in $\partial U$. But it is also closed, since
if for $y\in\partial U$ we have $y_{i}\to y$ for some sequence $y_{i}\in A$,
then for large enough $i$ we have $y\in\overline{U_{\varepsilon}(y_{i})}$.
Thus as $A$ is nonempty and $\partial U$ is connected we have $A=\partial U$.
This implies that $u\le c$ on $\partial U$. But in that case, the
strong maximum principle implies that $u$ is constant, which is a
contradiction.

Now suppose $L=-\Delta$. Then $u$ is analytic inside $U$, and its
level sets are locally, several analytic arcs emanating from a point.
Suppose, $S_{1}\subset U$ is a connected component of $S$, and $\bar{S}_{1}\subset U$.
Then as $S_{1}$ is a maximal connected subset of $S$, we have $S_{1}=\bar{S}_{1}$.
Thus $S_{1}$ is compact. Hence $S_{1}$ has a positive distance from
$\partial U$. It also has a positive distance from $\overline{S-S_{1}}$.
The reason is that if $S_{1}\cap\overline{S-S_{1}}\ne\emptyset$,
then there is a sequence in $S-S_{1}$ converging to a point in $S_{1}$,
which is also in $U$. But this implies that, that sequence belongs
to one of the analytic arcs emanating from that point. This means,
that sequence belongs to $S_{1}$, which is a contradiction.

Therefore, we can enclose $S_{1}$ by a simple closed Jordan curve
inside $U$ that still has a positive distance from $\bar{S}_{1}$,
and leaves $\overline{S-S_{1}}$ outside. We can argue as before and
get a contradiction, noting that as $u$ is analytic, it can not be
constant on this new domain.
\end{proof}

\begin{defn}
If $\delta(s)>0$ for $s\in(a,b)$ and $\delta(a)=\delta(b)=0$ then
we call the set 
\[
\{y(s)+t\mu(s)\,\mid\, s\in[a,b]\,,\, t\in[0,\delta(s)]\}
\]
a \textbf{plastic component}.
\end{defn}
Note that there are at most countably many plastic components. The
following theorem is a stronger version of a result proved in \citet{Safdari20151}.
Here, we also give some details of the proof that are not presented
there. For this theorem, we can allow $U$ to have several holes homeomorphic
to a disk, and not be simply connected.
\begin{thm}
\label{thm: plastic component}The number of plastic components attached
to a closed line segment of $\partial U$ is finite, if the endpoints
of the segment are not reentrant corners, and a neighborhood of each
endpoint in the segment has an elastic neighborhood in $U$, or belongs
to a plastic component.\end{thm}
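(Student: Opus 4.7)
The plan is a proof by contradiction. Suppose infinitely many plastic components are attached to the closed segment. By compactness, their base intervals accumulate at some point $y^*$ on the segment; the hypothesis on the endpoints (elastic $U$-neighborhood or lying inside a plastic component) precludes accumulation at an endpoint, so $y^*$ lies in the interior. I would choose coordinates so the segment sits on the $x$-axis with $U$ locally in $y>0$; by Theorem \ref{thm: ridge is inside} the $p$-ridge avoids a neighborhood of $y^*$, so in this neighborhood the $p$-closest boundary point is always on the segment, $d_p(x,y)=y$, and $\mu\equiv(0,1)$. In particular $u\equiv y$ on plastic components attached to the segment and $u\equiv 0$ on the segment itself.

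Next I would study the tangential derivative $v:=\partial_x u$. Since $\Delta u=-\eta$ in $E$, differentiating gives $\Delta v=0$ there, and because $u\in C^{1,1}_{\textrm{loc}}$, $v$ extends continuously to $\partial E$. In a neighborhood of $y^*$, $v$ vanishes on the entire intersection of $\partial E$ with that neighborhood: on flat elastic parts of the segment because $u\equiv 0$ there, and on each free-boundary arch above a plastic component attached to the segment because $Du=Dd_p=(0,1)$ by continuity of $Du$. If $v\equiv 0$ on $E$, then $u=u(y)$ and the PDE forces $u=ay-\frac{\eta}{2}y^2$, incompatible with the two-dimensional shape of $E$; so $v\not\equiv 0$. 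In each elastic gap between consecutive plastic components $P_i,P_{i+1}$, $v$ changes sign: for a small fixed $y_0>0$, $u(\cdot,y_0)=y_0$ on the $x$-projections of $P_i$ and $P_{i+1}$ while $u(\cdot,y_0)<y_0$ in between, so $\partial_x u<0$ near the left edge and $\partial_x u>0$ near the right edge, producing an interior zero of $v$. Thus infinitely many gaps yield infinitely many interior nodal points of $v$ accumulating at $y^*$.

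The final step --- the main obstacle --- is converting this accumulation into a contradiction. My plan is to invoke the second statement of Lemma \ref{lem: level sets} with $L=-\Delta$, which asserts that each connected component of $\{v=0\}\cap E$ has closure meeting $\partial E$. The key claim is that the subregions $\{v<0\}$ in the left half of each gap and $\{v>0\}$ in its right half lie, in distinct gaps, in distinct connected components of $E\setminus\{v=0\}$, since passing from one gap to another without crossing an intermediate arch or the segment (where $v=0$) is impossible. Infinitely many such sign components accumulating at $y^*$ should then conflict with the simple connectedness of $E$ and with $\partial E$ being a simple Jordan curve. The hardest technical part will be a careful local analysis at the corners where the free boundary meets the segment --- in particular ruling out degenerate configurations where two consecutive plastic components merely touch at a single point with no elastic gap in between --- and making the topological incompatibility precise.
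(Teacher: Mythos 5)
Your setup runs parallel to the paper's (tangential derivative $v=D_{\zeta}u$, harmonic in $E$, vanishing on the segment and on the free-boundary arches, with a sign change in each elastic gap), but the decisive final step is not valid as proposed. Producing infinitely many interior zeros, or even infinitely many distinct nodal domains, of a harmonic function accumulating at a boundary point is not in conflict with anything: a harmonic function on a Jordan domain, continuous up to the boundary, can perfectly well have infinitely many nodal domains clustering at one boundary point (e.g.\ the harmonic extension of a continuous boundary datum with infinitely many sign changes), so ``infinitely many sign components accumulating at $y^{*}$'' contradicts neither the simple connectedness of $E$ nor the fact that $\partial E$ is a Jordan curve. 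Moreover, your intermediate claim that the sign regions of distinct gaps lie in distinct components of $E\setminus\{v=0\}$ is unjustified: since the heights of the plastic components tend to $0$, one can pass from one gap to another through the elastic region \emph{above} the arches without meeting the segment or the free boundary, so some genuine separating mechanism inside $E$ is needed and is exactly what is missing.

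The paper supplies that mechanism differently. It places a fixed piecewise analytic curve $\gamma$ inside a tubular neighborhood of the tilted graph of $\delta$ in which $d_{p}$ depends only on the distance to the line (so $D_{\zeta}d_{p}=0$ there), with the endpoints of $\gamma$ chosen, using precisely the endpoint hypotheses of the theorem, so that $D_{\zeta}u$ has harmonic (hence analytic) continuation across $\partial E_{0}$ near them. Then, for each gap at height $\epsilon$, it follows level sets of $D_{\zeta}u$ at the \emph{nonzero} values $D_{\zeta}u(\tilde{c}_{i})$ rather than the zero level set: by Lemma \ref{lem: level sets} their closures must reach $\partial E_{0}$, and since they cannot terminate on the segment or on the free boundary (where $D_{\zeta}u=0$) they must reach $\gamma$. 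The resulting disjoint Jordan arcs $\phi_{i}(\epsilon)$ disconnect $E_{0}$ and force at least $n(\epsilon)-1$ sign changes of $D_{\zeta}u$ along $\gamma$; letting $\epsilon\to0$ makes $n(\epsilon)\to\infty$, contradicting the analyticity of $D_{\zeta}u$ along the fixed piecewise analytic curve $\gamma$. It is this transfer of sign changes onto a curve at positive distance from the segment, where analyticity bounds the number of sign changes, that your outline lacks; note also that your role for the endpoint hypotheses (only ruling out accumulation at an endpoint) misses their actual use, namely guaranteeing analytic continuation of $D_{\zeta}u$ around the endpoints of $\gamma$, and that a single fixed height $y_{0}$ cannot detect the gaps near $y^{*}$ because the component heights shrink to $0$.
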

\begin{proof}
Let the line segment be 
\[
\lambda_{1}:=\{(x_{1},\rho_{1}x_{1}+\rho_{2})\,\mid\, a\leq x_{1}\leq b\},
\]
and assume that $U$ is above the segment. Suppose to the contrary
that there are infinitely many plastic components 
\[
P_{i}=\{(x_{1},\rho_{1}x_{1}+\rho_{2})+t(\mu_{1},\mu_{2})\,\mid\, x_{1}\in[a_{i},b_{i}]\,,\, t\in[0,\delta(x_{1})]\}
\]
attached to the line segment. Where $\mu:=(\mu_{1},\mu_{2})$ is the
inward $p$-normal, $b_{i}\leq a_{i+1}$, and as noted before $\delta$
is a continuous nonnegative function on $[a,b]$. Let 
\[
H_{i}:=\underset{x\in[a_{i},b_{i}]}{\max}\delta(x).
\]
Since $b_{i}-a_{i}\rightarrow0$ as $i\rightarrow\infty$, we must
have $H_{i}\rightarrow0$. Otherwise a subsequence, $H_{n_{i}}$ converges
to a positive number and by taking a further subsequence we can assume
that this subsequence is $\delta(x_{n_{i}})$ where $x_{n_{i}}\to c$.
But this contradicts the continuity of $\delta$ at $c$ because $b_{n_{i}}\to c$
too. 

Hence any line $x_{2}=\rho_{1}x_{1}+\rho_{2}+\epsilon$ intersects
only a finite number, $n(\epsilon)$, of $P_{i}$'s, and $n(\epsilon)\rightarrow\infty$
as $\epsilon\rightarrow0$. 

Consider the tilted graph of $\delta$ over $\lambda_{1}$. It is
in the subset of $U$ consisting of points whose $p$-closest point
on $\partial U$ belongs to $\lambda_{1}$. Since $U-R_{p}$ is open,
the subset of this part of the tilted graph over which $\delta>0$
has a positive distance from $R_{p}$. On the part where $\delta=0$,
we have the same conclusion, noting that $R_{p}$ has a positive distance
from the interior of $\lambda_{1}$. If $\delta>0$ at the endpoints
of $\lambda_{1}$, we can argue as above, and if $\delta=0$ there,
we actually work with a subsegment of $\lambda_{1}$. Thus as the
$p$-closest point on $\partial U$ varies continuously in $U-R_{p}$,
the $p$-normals to $\lambda_{1}$ are parallel,%
\footnote{This is needed to prove that above the tilted graph of $\delta$,
along the $p$-normal at one of the endpoints, the $p$-closest point
on $\partial U$ is still that endpoint.%
} and $\lambda_{1}$ is compact, the tilted graph of $\delta$ attached
to $\lambda_{1}$ has a tubular neighborhood in $E$ that does not
intersect $R_{p}$ and consists of points whose $p$-closest point
on $\partial U$ belongs to $\lambda_{1}$.

Consider a piecewise analytic curve $\gamma$ in this tubular neighborhood,
that has no self intersection. The endpoints of $\gamma$ are on $\lambda_{1}$.
We specify the left endpoint of $\gamma$, the other one is similar.
If the part of $\lambda_{1}$ near its left endpoint has an elastic
neighborhood, we start $\gamma$ slightly to the right of the left
endpoint, staying in the elastic region. If the part of $\lambda_{1}$
near its left endpoint belongs to a plastic component, we start $\gamma$
at the maximum point on the tilted graph of $\delta$ on that plastic
component, which is on the right of the left endpoint. Even if the
maximum happens at the endpoint itself, we have to start $\gamma$
slightly after the endpoint on the free boundary.%
\footnote{This is necessary to ensure that $D_{\zeta}u$ has analytic continuation
around the endpoint of $\gamma$. Since, although $\partial U$ is
smooth at the endpoint of $\lambda_{1}$, it is not necessarily analytic
there.%
}

By our construction, $\gamma$ is close enough to $\lambda_{1}$ so
that for points between them, the $p\hspace{1bp}$-distance to $\partial U$
is the $p\hspace{1bp}$-distance to $\lambda_{1}$. Thus for those
points $d_{p}(x,\partial U)$ is a function of only $-\rho_{1}x_{1}+x_{2}$.
Since as proved in \citet{Safdari20151}, the $p$-distance to a line
is a multiple of the $2$-distance to the line, with coefficient depending
only on the line and $p$. Thus for $\zeta:=\frac{1}{\sqrt{1+\rho_{1}^{2}}}(1,\rho_{1})$
we have $D_{\zeta}d_{p}=0$ in this region.

Now let $E_{0}$ to be the elastic region enclosed by $\gamma$ and
the tilted graph of $\delta$ over $\lambda_{1}$. Let $\epsilon>0$
be small enough. On every open connected segment of $E_{0}\cap\{x_{2}=\rho_{1}x_{1}+\rho_{2}+\epsilon\}$
with endpoints on the free boundary of two different $P_{j}$'s, the
function $D_{\zeta}(u-d_{p})=D_{\zeta}u$ is analytic and changes
sign, as $u-d_{p}$ is zero on the endpoints and negative between
them. Let 
\[
\tilde{c}_{i}:=(c_{i},\rho_{1}c_{i}+\rho_{2}+\epsilon)\,,\,\tilde{c}_{i+1}:=(c_{i+1},\rho_{1}c_{i+1}+\rho_{2}+\epsilon)
\]
for $c_{i}<c_{i+1}$ be points close to those endpoints such that
\[
D_{\zeta}u(\tilde{c}_{i})<0\;,\; D_{\zeta}u(\tilde{c}_{i+1})>0.
\]
We can also assume that $D_{\zeta}u\le0$ on the part of the segment
joining $\tilde{c}_{i}$ to the free boundary, and similarly $D_{\zeta}u\ge0$
on the part of the segment joining $\tilde{c}_{i+1}$ to the free
boundary. Let $\sigma_{i}(\epsilon)$ be the connected component containing
$\tilde{c}_{i}$, of the level set 
\[
\{y\in E_{0}\,\mid\, D_{\zeta}u(y)=D_{\zeta}u(\tilde{c}_{i})\}.
\]

Then by Lemma \ref{lem: level sets}, the closure of the connected
components of the level sets of the harmonic function $D_{\zeta}u$,
will intersect the boundary of its domain $E_{0}$. Note that $\partial E_{0}$
consists of $\gamma$ and part of the image of 
\[
x_{1}\mapsto(x_{1},\rho_{1}x_{1}+\rho_{2})+\delta(x_{1})(\mu_{1},\mu_{2}),
\]
hence it is a simple closed Jordan curve, and $E_{0}$ is simply connected.%
\footnote{Note that $E_{0}$ is simply connected even when $U$ is not, since
for all points in it, the $p\hspace{1bp}$-closest point on $\partial U$
lies on $\lambda_{1}$. Thus no other part of $\partial U$ can be
inside it.%
} Also as shown in the introduction, $D_{\zeta}u$ is continuous on
$\overline{E_{0}}$, as we are away from reentrant corners. Obviously,
$D_{\zeta}u$ is not constant over $E_{0}$ too, unless the points
$\tilde{c}_{i},\tilde{c}_{i+1}$ do not exist, in which case we have
at most one plastic component.

We claim that there is a path in $\overline{\sigma_{i}(\epsilon)}$
that connects $\tilde{c}_{i}$ to a point on $\gamma$. To see this,
note that $D_{\zeta}u=D_{\zeta}(u-d_{p})$ is zero on the free boundary
and on the segment $\lambda_{1}$. Hence, $\overline{\sigma_{i}(\epsilon)}$
must intersect $\gamma$. In addition, $D_{\zeta}u$ is harmonic on
a neighborhood of $\gamma$. The reason is that locally, $D_{\zeta}u$
has harmonic continuation across the elastic parts of the segment
$\lambda_{1}$, and the free boundary attached to it, since they are
analytic curves and $D_{\zeta}u$ vanishes along them.%
\footnote{Note that the graph of an analytic function can be transformed into
a line segment by a conformal map. Also, a conformal change of variables
takes harmonic functions to harmonic functions. Now, the Schwarz reflection
principle gives the result.%
} Thus $D_{\zeta}u$ is harmonic on a neighborhood of $\overline{\sigma_{i}(\epsilon)}$.
But, the level sets of a harmonic function are locally, the union
of several analytic arcs emanating from a vertex. On the other hand,
as $\gamma$ is piecewise analytic, $\overline{\sigma_{i}(\epsilon)}\cap\gamma$
is a finite set. Hence, $\overline{\sigma_{i}(\epsilon)}$ is locally
path connected, and as it is connected it must be path connected.

Consider an injective%
\footnote{As $\overline{\sigma_{i}(\epsilon)}$ is Hausdorff and path connected,
it is arcwise connected, i.e. any two distinct points in it can be
connected by an injective continuous path.%
} path that connects $\tilde{c}_{i}$ to $\gamma$, and its last intersection
with the segment joining $\tilde{c}_{i}$ to the free boundary along
the line $x_{2}=\rho_{1}x_{1}+\rho_{2}+\epsilon$. Let $\phi_{i}(\epsilon)$
be the union of the part of the path that connects that last intersection
point to $\gamma$, and the part of the segment that joins it to the
free boundary. Therefore, $\phi_{i}(\epsilon)$ is a simple Jordan
curve, connecting two distinct points of $\partial E_{0}$. Hence
it disconnects $E_{0}$. Since obviously $\phi_{i+1}(\epsilon)\cap\phi_{i}(\epsilon)=\emptyset$,
$D_{\zeta}u$ must change sign at least $n(\epsilon)-1$ times along
$\gamma$. But $n(\epsilon)-1$ grows to infinity as $\epsilon\rightarrow0$,
contradicting the fact that $\gamma$ is piecewise analytic and $D_{\zeta}u$
is analytic on a neighborhood of it.\end{proof}
\begin{rem}
The only kind of line segments not covered by the above theorem, are
those that one of their endpoints is the accumulation point of a family
of plastic components. The main difficulty in this case is that, $D_{\zeta}u$
might not have analytic continuation in a neighborhood of the endpoints
of $\gamma$. For these segments, we can still apply the above reasoning
to their proper subsegments. Since we can choose the curve $\gamma$
to start and end slightly before and after the endpoints of the subsegment,
at new endpoints satisfying one of the conditions of the theorem.
This way we can prove that the family of plastic components attached
to these subsegments is finite too.
\end{rem}

\section{The Number of Plastic Components}

Next, we are going to give an estimate on the number of plastic components.
Let $\partial U=\lambda\cup\Lambda$ where 
\[
\lambda:=\bar{S}_{1}=\{(x_{1},\rho_{1}x_{1}+\rho_{2})\,\mid\,0\le x_{1}\le b\}
\]
and $\Lambda:=\bar{S}_{2}\cup\cdots\cup\bar{S}_{m}$. We also assume
that some $U$-neighborhood of $\lambda$ lies in $\{x_{2}>\rho_{1}x_{1}+\rho_{2}\}$. 

Let $y=f(s)$ be a parametrization of $\partial U$ for $0\le s\le L$,
with $f(0)=(b,\rho_{1}b+\rho_{2})=V_{0}$ and $f(s_{1})=(0,\rho_{2})=V_{1}$. 

We know that along $\partial U$ the $p$-distance function $d_{p}$
is differentiable, except at the points $f(s_{j})=V_{j}$. Let $\nu(s)=(\nu_{1}(s),\nu_{2}(s))$
for $s\ne s_{j}$ be the inward normal to $\partial U$ at $f(s)$
with $\gamma_{q}(\nu(s))=1$. Also let $\zeta=\frac{1}{\sqrt{1+\rho_{1}^{2}}}(1,\rho_{1})$
as before. Then by (\ref{eq: Dd_p}) and continuity of $Dd_{p}$,
we have 
\[
D_{\zeta}d_{p}(f(s))=\nu(s)\cdot\zeta.
\]

\begin{assumption}
\label{assu: 2}The set $\{s\in[0,L]-\{s_{j}\}\,\mid\,\nu(s)\cdot\zeta=0\}$
consists of a finite number of points, and a finite number of intervals.
\end{assumption}
Therefore $\nu\cdot\zeta$ changes sign a finite number of times.
Let 
\begin{eqnarray}
 &  & k:=\textrm{ The number of times }\nu\cdot\zeta\label{eq: k}\\
 &  & \textrm{changes sign from positive to negative on the interval }[s_{1},L].\nonumber 
\end{eqnarray}

Consider $f(s)+\delta(s)\mu(s)$ for $s\ne s_{j}$, which parametrizes
the free boundary when $\delta>0$. Note that $d_{p}$ is $C^{1,\alpha}$
around these points even if $\delta(s)=0$. Since $f(s)$ is the unique
$p$-closest point on $\partial U$ to $f(s)+\delta(s)\mu(s)$ when
$s\ne s_{j}$, by (\ref{eq: Dd_p}) we have 
\[
D_{\zeta}d_{p}(f(s)+\delta(s)\mu(s))=\nu(s)\cdot\zeta.
\]

Now consider the function 
\begin{eqnarray*}
 &  & u_{1}(s):=D_{\zeta}u(f(s)+\delta(s)\mu(s))\qquad s\ne s_{j}\\
 &  & u_{1}(s_{j}):=0.
\end{eqnarray*}
Note that $u_{1}$ is continuous at $s_{j}$'s. The reason is that
$Du(f(s_{j}))=0$ by continuity of $Du$ there, and the fact that
the directional derivatives of $u$ vanish in two directions at $f(s_{j})$.
\begin{lem}
\label{lem: signs}$u_{1}(s)$ has the same sign as $\nu(s)\cdot\zeta$
for $s\ne s_{j}$. \end{lem}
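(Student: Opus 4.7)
My plan is to prove the stronger statement that $u_1(s) = c_s\,\nu(s)\cdot\zeta$ for some $c_s \ge 0$, strictly positive whenever $\nu(s)\cdot\zeta \ne 0$, by case analysis on the value of $\delta(s)$. Write $x_s := f(s) + \delta(s)\mu(s)$.

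When $\delta(s) > 0$, the point $x_s$ lies on the free boundary $\Gamma$. Since $s \ne s_j$ and $\mu(s)$ is by construction the unique $p$-normal direction at $f(s)$, the point $x_s$ is outside $R_p$ with $f(s)$ as its unique $p$-projection. Combining the identity $Du = Dd_p$ on $\Gamma$ (recalled in the introduction) with formula (\ref{eq: Dd_p}) and the normalization $\gamma_q(\nu(s)) = 1$ yields
\[
u_1(s) \;=\; D_\zeta u(x_s) \;=\; D_\zeta d_p(x_s) \;=\; \nu(s)\cdot\zeta,
\]
so $c_s = 1$ and the two sides agree exactly.

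When $\delta(s) = 0$, the point is $x_s = f(s) \in \partial U$. If $s \in [0,s_1]$, then $f(s) \in \lambda$ and $\zeta$ is by choice the unit tangent to $\lambda$, so $\nu(s)\cdot\zeta = 0$, while $D_\zeta u(f(s)) = 0$ as the tangential derivative of $u \equiv 0$ along $\lambda$; both vanish and nothing is to be checked. Suppose instead $s \in (s_1, L) \setminus \{s_j\}$. If there is a sequence $s_n \to s$ with $\delta(s_n) > 0$, then $x_{s_n} \to f(s)$; continuity of $Du$ up to $\partial U$ provided by Theorem \ref{thm: regularity} together with continuity of $\nu$ at the smooth point $f(s)$ allows passing to the limit in the previous case to conclude $u_1(s) = \nu(s)\cdot\zeta$. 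Otherwise $\delta \equiv 0$ on a $\partial U$-neighborhood of $s$, so $f(s)$ has a one-sided $U$-neighborhood contained in the elastic region, where $\Delta u = -\eta < 0$. Since $u \ge 0$ in $U$ with $u(f(s)) = 0$, applying the Hopf boundary-point lemma to $-u$ at the smooth point $f(s)$ (interior ball supplied by smoothness of $\partial U$ away from vertices) yields $\partial u/\partial \nu(f(s)) > 0$. Combined with the vanishing tangential derivative of $u$ along $\partial U$, this forces $Du(f(s)) = c_s\,\nu(s)$ with $c_s > 0$, and hence $u_1(s) = c_s(\nu(s)\cdot\zeta)$, which has the correct sign.

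The delicate step is the last subcase. One has to distinguish purely elastic boundary arcs, where the sign comes from the Hopf lemma, from boundary points approached by plastic parameters, where the sign is forced by the global $C^{1,\alpha}$ regularity of $u$ and a limiting argument rather than by a maximum-principle tool. Once these two mechanisms are isolated, everything else reduces to reading off \eqref{eq: Dd_p} on $\Gamma$ and unpacking the Dirichlet condition $u|_{\partial U} = 0$.
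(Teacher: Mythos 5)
Your proof is correct and follows essentially the same route as the paper: the identity $Du=Dd_{p}$ on the free boundary when $\delta>0$, continuity of $Du$ up to $\bar{U}$ for parameters that are limits of plastic ones, and Hopf's lemma combined with the vanishing tangential derivative on elastic boundary arcs. The only cosmetic difference is that you conclude $Du(f(s))=c_{s}\,\nu(s)$ with $c_{s}>0$ directly, whereas the paper solves the corresponding $2\times2$ linear system for $D_{\zeta}u$ and $D_{\xi}u$; the two computations are equivalent.
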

\begin{proof}
Since on the free boundary $Du=Dd_{p}$, we have 
\[
u_{1}(s)=\nu(s)\cdot\zeta
\]
when $\delta(s)>0$. 

Consider a point $s_{0}$ different than $s_{j}$'s, with $\nu(s_{0})\cdot\zeta>0$.
If $\delta(s_{0})>0$ then obviously $u_{1}(s_{0})>0$ too. If $\delta(s_{0})=0$
but $s_{0}=\lim s_{k}$ where $\delta(s_{k})>0$, then by continuity
we still have $u_{1}(s_{0})=\nu(s_{0})\cdot\zeta>0$. And finally,
if neither of these happen at $s_{0}$, then $\delta\equiv0$ on a
neighborhood of $s_{0}$. This means that some $U$-neighborhood of
$f(s_{0})$ is elastic. Thus in that neighborhood we have 
\[
-\Delta u=\eta>0.
\]
As $u>0$ in $U$ and $u=0$ on $\partial U$, the strong maximum
principle (actually the Hopf's lemma used in its proof) implies that
\begin{equation}
\nu(s_{0})\cdot\zeta\, D_{\zeta}u(f(s_{0}))+\nu(s_{0})\cdot\xi\, D_{\xi}u(f(s_{0}))=D_{\nu}u(f(s_{0}))>0.\label{eq: normal der}
\end{equation}
Here $\xi$ is a unit vector orthogonal to $\zeta$. On the other
hand, $u$ is constant along $\partial U$, therefore its tangential
derivative vanishes, i.e. 
\begin{equation}
-\nu(s_{0})\cdot\xi\, D_{\zeta}u(f(s_{0}))+\nu(s_{0})\cdot\zeta\, D_{\xi}u(f(s_{0}))=0.\label{eq: tangent der}
\end{equation}
Now using this and the fact that $\nu(s_{0})\cdot\zeta>0$, we can
rewrite (\ref{eq: normal der}) to get 
\[
[\nu(s_{0})\cdot\zeta+\frac{[\nu(s_{0})\cdot\xi]^{2}}{\nu(s_{0})\cdot\zeta}]\, D_{\zeta}u(f(s_{0}))>0.
\]
Hence $u_{1}(s_{0})=D_{\zeta}u(f(s_{0}))>0$ as desired. When $\nu(s_{0})\cdot\zeta<0$,
we can repeat the above arguments to deduce that $u_{1}(s_{0})<0$
too. 

When $\nu(s_{0})\cdot\zeta=0$, we can still deduce that $u_{1}(s_{0})=0$.
The only difference with the above argument is that when $\delta\equiv0$
on a neighborhood of $s_{0}$, we have to use (\ref{eq: tangent der})
to get the result, noting that $\nu(s_{0})\cdot\xi\ne0$ when $\nu(s_{0})\cdot\zeta=0$. 
\end{proof}
It should be noted that $u_{1}(s)=0$ for $s\in[0,s_{1}]$.
\begin{defn}
The points of the form $f(s)+\delta(s)\mu(s)$ for which $u_{1}(s)=0$
will be called \textbf{flat points}. By Assumption \ref{assu: 2}
and the above argument, the set of flat points consists of a finite
number of points, and a finite number of arcs called \textbf{flat
intervals}.
\end{defn}

Consider the harmonic function $D_{\zeta}u$ over the elastic region
$E$. $D_{\zeta}u$ has harmonic continuation to a neighborhood of
each interior point of a flat interval, if around that point either
$\delta>0$ or $\delta\equiv0$. The reason is that for a flat interval
we have $\nu\cdot\zeta\equiv0$ over the part of $\partial U$ attached
to it. Hence that part of $\partial U$ is a line segment in the $\zeta$
direction. Thus the flat interval which is either this line segment,
or a free boundary attached to it, is in both cases an analytic curve.
\begin{lem}
\label{lem: level curves}Let $x_{0}\in E$ be a point where $D_{\zeta}u(x_{0})=0$.
There exists a simple Jordan curve $\{t\mapsto\gamma(t)\,;\, t\in\mathbb{R}\}$
in $E$ passing through $x_{0}$, along which $D_{\zeta}u=0$. Furthermore,
\[
\underset{t\to-\infty}{\lim}\gamma(t)\qquad,\qquad\underset{t\to+\infty}{\lim}\gamma(t)
\]
exist, are different, and belong to $\partial E$.\end{lem}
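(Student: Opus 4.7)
The plan is to build $\gamma$ as a maximal analytic continuation through $x_0$ of the zero set of $D_\zeta u$, and then verify that its two ends land at distinct boundary points of $E$. First note that $\Delta u = -\eta$ is constant on $E$, so $D_\zeta u$ is harmonic there and hence real-analytic; we may assume $D_\zeta u \not\equiv 0$ on $E$. At each zero $x$ in $E$, the local structure theorem for harmonic functions gives that $\{D_\zeta u = 0\}$ is a finite union of analytic arcs through $x$, crossing equiangularly: one smooth arc at regular points, and $n \ge 2$ arcs at an isolated critical point of order $n-1$.

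I construct $\gamma$ by arclength-parametrized analytic continuation starting from $x_0$, walking out in both directions. At a regular point the continuation is unique; at a critical point I choose an outgoing arc distinct from the incoming one, which preserves injectivity locally. Critical points are isolated by analyticity, so only finitely many lie in any compact subset of $E$, and the extension can be pursued as long as $\gamma(t)$ stays in a compact subset of $E$. To see that $\gamma(t)$ eventually leaves every compact $K \subset E$: if not, the bounded piece would have to close up into a Jordan loop $\gamma_0 \subset E$ on which $D_\zeta u = 0$. Since $E$ is simply connected (as recorded in the introduction), $\gamma_0$ bounds a Jordan subdomain $\Omega \subset E$; $D_\zeta u$ is harmonic on $\Omega$ and vanishes on $\partial\Omega = \gamma_0$, so the maximum principle forces $D_\zeta u \equiv 0$ on $\Omega$, and hence on all of $E$ by real analyticity, a contradiction.

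For the existence of the endpoint limits, I use that $D_\zeta u$ is continuous on $\overline{E}$, so every subsequential limit of $\gamma(t)$ lies in the zero set of $D_\zeta u$ on $\partial E$. By Lemma~\ref{lem: signs} and Assumption~\ref{assu: 2}, this zero set consists of finitely many flat intervals and isolated flat points. Across the analytic flat portions of $\partial E$, $D_\zeta u$ has a harmonic continuation by the Schwarz reflection argument used in Theorem~\ref{thm: plastic component}, so near any such flat arc the zero set of $D_\zeta u$ is again a finite union of analytic arcs; hence $\gamma(t)$ must approach $\partial E$ along exactly one of them, giving a single limit $p_\pm := \lim_{t\to\pm\infty}\gamma(t) \in \partial E$. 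Finally, $p_+ \ne p_-$: otherwise $\gamma \cup \{p_+\}$ is a simple closed curve in $\overline{E}$ bounding a Jordan subdomain $\Omega \subset E$ on which $D_\zeta u$ is harmonic, continuous up to $\overline{\Omega}$, and identically zero on $\partial\Omega$ (on $\gamma$ by construction, and at the single point $p_+$ by continuity). The maximum principle again yields $D_\zeta u \equiv 0$ on $\Omega$ and hence on $E$, contradicting our standing assumption.

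The main obstacle is the existence of the endpoint limits rather than mere accumulation on $\partial E$: ruling out oscillation of $\gamma(t)$ between different components of the set of flat points is what forces the use of analytic continuation of $D_\zeta u$ across the analytic flat portions of $\partial E$, together with the finiteness supplied by Assumption~\ref{assu: 2}. Everything else reduces to the local structure of zero sets of harmonic functions plus two applications of the maximum principle to Jordan subdomains of $E$.
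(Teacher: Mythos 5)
Your construction of the curve itself is essentially the paper's argument in different clothing: the paper produces a maximal injective arc in the level set via Zorn's lemma, you produce it by analytic continuation through the locally finite union of analytic arcs, and in both cases self-intersection or confinement to a compact subset of $E$ is excluded by the loop/maximum-principle argument together with simple connectedness of $E$. That part, and the final step $p_+\ne p_-$, are fine.

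The gap is in the existence of the limits $\lim_{t\to\pm\infty}\gamma(t)$, which is the real content of the lemma. Your mechanism is reflection of $D_{\zeta}u$ across ``analytic flat portions'' of $\partial E$, but this continuation is only available near an interior point of a flat interval around which either $\delta>0$ or $\delta\equiv0$; at an interior point of a flat interval where a sequence of plastic components accumulates ($\delta=0$ there but not identically near it), $\partial E$ is not locally an analytic arc and no Schwarz reflection is available. The paper treats exactly this case separately, by replacing the bad subsequential limit $x'$ with a nearby point $x''$ of the same flat interval at which $\delta(x'')>0$ or $\delta\equiv0$ locally, and then running the finite-arc argument in a disk around $x''$. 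Moreover, your sentence ``hence $\gamma(t)$ must approach $\partial E$ along exactly one of them'' does not address oscillation between flat components at which no continuation is claimed at all, namely isolated flat points and endpoints of flat intervals. To rule that out you need an extra ingredient: either note that the set of subsequential limits of $\gamma(t)$ as $t\to+\infty$ is connected, so if it is not a single point it must contain a nondegenerate subarc of a flat interval (hence interior points, reducing to the continuation case), or argue as the paper does that two distinct subsequential limits $x^{*}\ne x'$ would force infinitely many limit points of $\gamma$ on $\partial E$ between them, contradicting the finiteness of the flat set guaranteed by Assumption \ref{assu: 2}. As written, the appeal to ``finiteness supplied by Assumption \ref{assu: 2}'' does not by itself close either of these cases.
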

\begin{proof}
Since $D_{\zeta}u$ is harmonic, its level sets in $E$ are locally,
the union of several analytic arcs emanating from a vertex. Consider
the family of injective continuous maps from $(-1,1)$ into the level
set of $D_{\zeta}u$ at $x_{0}$, which take zero to $x_{0}$. We
endow this family with a partial order relation. For $f_{1},f_{2}$
in the family, we say $f_{1}\le f_{2}$ if 
\[
f_{1}((-1,1))\subseteq f_{2}((-1,1)).
\]
Now, we can apply Zorn's lemma to deduce the existence of a maximal
map. We only need to check that any increasing chain has an upper
bound. Consider such a chain $\{f_{\alpha}\}$. We claim that each
$f_{\beta}((-1,1))$ is open in $\underset{\alpha}{\cup}f_{\alpha}((-1,1))$.
Consider a point $f_{\beta}(t_{0})$ in $f_{\beta}((-1,1))$, then
the level set around it, is the union of several arcs emanating from
it, and $f_{\beta}((t_{0}-\epsilon,t_{0}+\epsilon))$ is one of them.
Now, none of the sets $f_{\alpha}((-1,1))-f_{\beta}((t_{0}-\epsilon,t_{0}+\epsilon))$
can intersect one of these arcs. Since otherwise we have a loop in
the level set, which results in $D_{\zeta}u\equiv0$ by the maximum
principle and simple connectedness of $E$. This contradiction gives
the result.%
\footnote{This is a contradiction, because otherwise $u$ must be constant zero
on any segment in $E$ in the $\pm\zeta$ direction that starts from
a point on $\partial U$ close to the vertices of $\lambda$. This
means $u\equiv0$ on an open subset of $E$, contradicting the fact
that $\Delta u=-\eta\ne0$ there.%
}

Therefore $\underset{\alpha}{\cup}f_{\alpha}((-1,1))$ is the union
of countably many of $f_{\alpha}((-1,1))$'s, since the topology of
$\mathbb{R}^{2}$ is second countable. Now, by reparametrizing the
maps in this countable subchain and gluing them together, we obtain
a continuous map from $(-1,1)$ onto $\underset{\alpha}{\cup}f_{\alpha}((-1,1))$.
The injectivity of this map is easy to show, since if it fails it
must fail for one of the maps in the countable subchain too.

Now, consider $\gamma$, a maximal simple Jordan curve in the level
set $\{D_{\zeta}u=0\}$ passing through $x_{0}$, parametrized from
$-\infty$ to $\infty$ with $\gamma(0)=x_{0}$. Since $E$ is bounded,
every sequence $t_{k}\to\infty$ has a subsequence such that $\gamma(t_{k_{i}})\to x^{*}$.
If $x^{*}\in E$, then $\gamma(t_{k_{i}})$ belongs to one of the
arcs in the level set emanating from $x^{*}$. Thus, the tale of $\gamma$
coincides with that arc, as the level set around $x^{*}$ is the union
of those arcs, and $\gamma$ is one to one. Therefore, either $\gamma$
can be extended beyond $x^{*}$, or we get a loop in the level set,%
\footnote{Depending on wether $x^{*}$ is on the image of $\gamma$ or not.%
} which are contradictions. Hence, every such limit must belong to
$\partial E$ and be a flat point. 

Now suppose that for two sequences $t_{k},t_{l}\to\infty$, we have
$\gamma(t_{k})\to x^{*}$ and $\gamma(t_{l})\to x'$, where $x^{*},x'\in\partial E$.
Suppose $x^{*}\ne x'$ and one of them, say $x'$, belongs to the
interior of a flat interval. Then, if $D_{\zeta}u$ has harmonic continuation
in a disk around $x'$, the level set $\{D_{\zeta}u=0\}$ is again
the union of finitely many arcs emanating from $x'$. Therefore, $\gamma$
can not intersect the boundary of that disk an infinite number of
times, contradicting our assumption. If $D_{\zeta}u$ does not have
harmonic continuation around $x'$, then $\delta(x')=0$ and a sequence
of plastic components accumulate at $x'$. In this case, we can find
a sequence of points $\gamma(t_{l'})$ at an appropriate distance
from $\gamma(t_{l})$, such that $\gamma(t_{l'})\to x''$. Where $x''$
is in the interior of the same flat interval, and either $\delta(x'')>0$
or $\delta\equiv0$ around it. Thus $D_{\zeta}u$ has harmonic continuation
around $x''$ and we can argue as before.

Thus, if $x^{*}\ne x'$ then none of them can belong to the interior
of a flat interval. Hence they are either isolated flat points or
the endpoints of flat intervals. But again, looking at the arcs between
$\gamma(t_{k})$ and $\gamma(t_{l})$ on the image of $\gamma$, we
see that there are infinitely many limit points on $\partial E$ between
$x^{*},x'$, which contradicts Assumption \ref{assu: 2} and the argument
in the previous paragraph. Hence the limits $\underset{t\to\pm\infty}{\lim}\gamma(t)$
exist. 

Finally, if the two limit points of $\gamma$ coincide, the strong
maximum principle and continuity of $Du$ over $\bar{U}$ imply that
$D_{\zeta}u\equiv0$ over some domain, and consequently over $E$,
which is a contradiction.\end{proof}
\begin{rem}
Note that the endpoints of $\gamma$ can belong to the interior of
a flat interval. Also, $\gamma$ is analytic except at a countable
number of points. The reason is that its singularity can happen at
the points where $DD_{\zeta}u=0$. But $D_{\zeta}u$ is harmonic and
this set of points is at most countable with accumulation points on
$\partial E$. \end{rem}
\begin{lem}
\label{lem: finite level curves}The set of level curves in Lemma
\ref{lem: level curves} is finite.\end{lem}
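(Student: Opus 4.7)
The plan is to argue by contradiction: suppose there are infinitely many distinct maximal Jordan curves $\gamma$ of the type produced by Lemma \ref{lem: level curves}. By that lemma together with Assumption \ref{assu: 2}, the two endpoints $\lim_{t\to\pm\infty}\gamma(t)$ are flat points of $\partial E$, and the set of flat points of $\partial E$ decomposes into finitely many \emph{flat components}---either isolated flat points or closed flat intervals. Since there are only finitely many unordered pairs of flat components, the pigeonhole principle furnishes two distinct level curves $\gamma_{1},\gamma_{2}$ whose endpoint pairs lie in the same pair $\{F_{i},F_{j}\}$; label the endpoints as $a_{1},a_{2}\in F_{i}$ and $b_{1},b_{2}\in F_{j}$.

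Next I would use $\gamma_{1}$ and $\gamma_{2}$ to carve out a subregion $\Omega\subset E$ whose boundary is $\gamma_{1}\cup\gamma_{2}$ together with sub-arcs of $F_{i}$ and $F_{j}$ joining the endpoints inside each flat component. Since $F_{i},F_{j}$ are arcs of $\partial E$, these sub-arcs lie entirely in flat intervals, and on the whole of $\partial\Omega$ one has $D_{\zeta}u=0$: on $\gamma_{1},\gamma_{2}$ by construction, and on the flat sub-arcs by the very definition of flat points. The crucial ingredient, noted just before Lemma \ref{lem: level curves}, is that $D_{\zeta}u$ admits a harmonic continuation across the interior of every flat interval (whether $\delta>0$ or $\delta\equiv0$ there). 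Hence $D_{\zeta}u$ is harmonic on an open set containing $\overline{\Omega}$ away from the finitely many endpoints of the flat intervals, while remaining bounded and continuous at those isolated points. The maximum principle applied on $\Omega$ then yields $D_{\zeta}u\equiv0$ in $\Omega$, and unique continuation for harmonic functions forces $D_{\zeta}u\equiv0$ throughout the connected domain $E$. As already pointed out in an earlier footnote, this contradicts $\Delta u=-\eta\ne0$, since $D_\zeta u \equiv 0$ on $E$ would make $u$ constant along every line in the $\zeta$-direction in $E$ and hence identically zero on an open subset of $E$ reached from $\lambda$.

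The hard part will be the topological set-up of $\Omega$: one must check that $\gamma_{1}\cup\gamma_{2}$ together with the chosen flat sub-arcs actually forms a simple closed Jordan curve enclosing a piece of $E$. This is not immediate because $\gamma_{1},\gamma_{2}$ can intersect at critical points of $D_{\zeta}u$ inside $E$ and can share endpoints on $\partial E$. I would handle this by replacing $\gamma_{1},\gamma_{2}$ with their initial sub-arcs emanating from their first mutual interior intersection (if any), and, whenever $F_{i}$ or $F_{j}$ is a single isolated flat point serving as a common endpoint, by collapsing the corresponding sub-arc to that point. This reduces us to the case where $\gamma_{1},\gamma_{2}$ are disjoint except at their endpoints on $\partial E$ and bound a simply connected piece of $E$, where the maximum-principle argument above applies without modification.
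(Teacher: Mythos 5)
Your proof is correct and follows essentially the same route as the paper: by Assumption \ref{assu: 2} there are only finitely many isolated flat points and flat intervals, and two distinct level curves joining the same pair (or a curve with both ends on one flat interval) would bound a subregion of $E$ on whose boundary $D_{\zeta}u=0$, so the maximum principle and analyticity force $D_{\zeta}u\equiv0$ in $E$, a contradiction. The paper states this tersely (``for the same reason''), while you spell out the pigeonhole step and the topological construction of the enclosed region, including the case of an interior intersection; this is just added detail, not a different argument.
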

\begin{proof}
First, note that any such curve can not have both its endpoints on
the same flat interval, since otherwise $D_{\zeta}u\equiv0$ on $E$
which is a contradiction. Second, for the same reason, two such curves
can not have the same endpoints, or have each of their endpoints on
the same flat intervals, or one endpoint the same and the other one
on one flat interval.

Therefore, there is at most one such curve, connecting two isolated
flat points, or two flat intervals, or an isolated flat point and
a flat interval. Hence we get the result.\end{proof}
\begin{rem}
A consequence of this lemma is that all the level curves given by
Lemma \ref{lem: level curves} are piecewise analytic. The reason
is that the singularities of the level curves happen at the zeros
of $DD_{\zeta}u$, and through any such point at least two level curves
pass. Thus their number must be finite, as no two level curves can
intersect more than once.
\end{rem}
Let us fix some notation before proceeding. We denote by $\lambda_{E}$
the part of $\lambda$ with no plastic component attached to it. We
also denote by $\Gamma_{\lambda}$ the union of the free boundaries
of the plastic components attached to $\lambda$. Finally let $\lambda_{0}:=\bar{\lambda}_{E}\cup\Gamma_{\lambda}$.
Similarly we define $\Lambda_{E},\Gamma_{\Lambda}$ and $\Lambda_{0}$.

Since all the corners of $\partial U$ are nonreentrant and they have
an elastic neighborhood, the number of plastic components attached
to $\lambda$ is finite by Theorem \ref{thm: plastic component}.
We denote these plastic components by 
\[
P_{j}:=\{(x_{1},\rho_{1}x_{1}+\rho_{2})+t(\mu_{1},\mu_{2})\,\mid\, a_{j}\le x_{1}\le b_{j}\,,\,0\le t\le\delta(x_{1})\}\qquad j=1,2,\cdots\tau,
\]
where $(\mu_{1},\mu_{2})$ is the inward $p$-normal, and 
\[
0<a_{1}<b_{1}\le a_{2}<b_{2}\le\cdots\le a_{\tau}<b_{\tau}<b.
\]
In each interval $\{a_{j}\le x_{1}\le b_{j}\}$, $\delta$ has $N_{j}$
local maxima. These are strict local maxima since the tilted graph
of $\delta$, which is the free boundary, is an analytic curve.

Consider one of the plastic components $P_{j}$. Let $\beta$ be a
point of local maximum of $\delta(x_{1})$ over $x_{1}\in[a_{j},b_{j}]$.
\begin{lem}
\label{lem: level curve from max}There exists a level curve $\{t\mapsto\gamma(t)\,;\, t\in\mathbb{R}\}$
of $\{D_{\zeta}u=0\}$ in $E$ with no self intersections, such that
\[
\underset{t\to-\infty}{\lim}\gamma(t)=(\beta,\rho_{1}\beta+\rho_{2})+\delta(\beta)(\mu_{1},\mu_{2})=:\tilde{\beta},
\]
and $\gamma(\infty):=\underset{t\to\infty}{\lim}\gamma(t)$ belongs
to $\Lambda_{0}-\lambda_{0}$.\end{lem}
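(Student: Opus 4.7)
The key observation is that because $\tilde{\beta}$ is a local maximum of $\delta$ (so the free boundary is tangent to $\lambda$ at $\tilde{\beta}$), the harmonic function $D_\zeta u$ vanishes to order at least two at $\tilde{\beta}$ from the elastic side. This forces a second analytic branch of the zero set $\{D_\zeta u=0\}$ to emerge from $\tilde{\beta}$ into $E$, which can then be extended using Lemma \ref{lem: level curves} and whose other endpoint is identified via the argument from Lemma \ref{lem: finite level curves}.

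For the higher-order vanishing, fix orthonormal coordinates $(\zeta,\xi)$ with $\xi:=\zeta^\perp$ and origin at $\tilde{\beta}$. On $\bar P_j$ near $\tilde{\beta}$ the $p$-closest point on $\partial U$ lies on $\lambda$, and as recalled in the proof of Theorem \ref{thm: plastic component} the $p$-distance to the line $\lambda$ is a constant multiple of the Euclidean distance, hence $d_p$ is a linear function of $\xi$ alone on $\bar P_j$. Therefore $D_\zeta u\equiv 0$ and $D_\xi u\equiv C$ on $\bar P_j$, and by continuity of $Du$ these equalities hold for the $E$-side traces on $\Gamma$ near $\tilde{\beta}$. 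Because $\delta'(\beta)=0$, the tangent to $\Gamma$ at $\tilde{\beta}$ is $\zeta$, and $\Gamma$ is analytic near $\tilde{\beta}$; Schwarz reflection then extends both $D_\zeta u$ and $D_\xi u-C$ as harmonic functions across $\Gamma$ on some neighborhood $B$ of $\tilde{\beta}$. Parametrizing $\Gamma\cap B$ as $\{(s,\psi(s))\}$ with $\psi(0)=\psi'(0)=0$ and differentiating $D_\zeta u(s,\psi(s))=0$ and $D_\xi u(s,\psi(s))=C$ in $s$ at $s=0$ yields $D^2_{\zeta\zeta}u(\tilde{\beta})=0$ and $D^2_{\zeta\xi}u(\tilde{\beta})=0$ via these $E$-side extensions. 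Consequently the harmonic extension $h$ of $D_\zeta u$ satisfies $h(\tilde{\beta})=0$ and $\nabla h(\tilde{\beta})=0$; moreover $h$ is non-trivial, for otherwise $D_\zeta u\equiv 0$ on $E$, which by the footnote in the proof of Lemma \ref{lem: level curves} forces $u\equiv 0$ on an open subset of $E$, contradicting $\Delta u=-\eta\neq 0$.

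A non-trivial harmonic function vanishing to order $\ge 2$ at $\tilde{\beta}$ has a zero set consisting of at least two analytic arcs meeting transversally at $\tilde{\beta}$. One of them is $\Gamma$; any other is transverse to $\Gamma$ and so has a half lying in $E$. Choose $x_0\in E$ on such a transverse arc, close to $\tilde{\beta}$. Since $h=D_\zeta u$ on $E\cap B$, we have $D_\zeta u(x_0)=0$, and Lemma \ref{lem: level curves} applied at $x_0$ produces a maximal injective continuous curve $\gamma\colon\mathbb{R}\to E\cap\{D_\zeta u=0\}$ with $\gamma(0)=x_0$ and distinct limits $\gamma(\pm\infty)\in\partial E$. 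Extending $\gamma$ in the direction of $\tilde{\beta}$ follows the analytic branch through $x_0$ and cannot be continued past $\tilde{\beta}\in\partial E$; after reparametrization we obtain $\gamma(-\infty)=\tilde{\beta}$.

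To locate the other endpoint, recall that $\nu\cdot\zeta\equiv 0$ along $\lambda$, so $\lambda_0=\bar\lambda_E\cup\Gamma_\lambda$ is a single flat interval containing $\tilde{\beta}$. By the first paragraph of the proof of Lemma \ref{lem: finite level curves}, no level curve of $\{D_\zeta u=0\}$ in $E$ can have both its limit endpoints on one flat interval, since joining them along that interval would bound a sub-domain of $E$ on whose boundary $D_\zeta u$ vanishes, forcing $D_\zeta u\equiv 0$ on $E$ and yielding the same contradiction as above. Hence $\gamma(+\infty)\notin\lambda_0$, and since $\partial E=\lambda_0\cup\Lambda_0$ we conclude $\gamma(+\infty)\in\Lambda_0-\lambda_0$, as required. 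The most delicate step is the second-derivative computation: the boundary values of $D^2 u$ at $\tilde{\beta}$ are only legitimized by considering the Schwarz-reflected harmonic extensions of $D_\zeta u$ and of $D_\xi u-C$, both of which vanish on the analytic arc $\Gamma$ near $\tilde{\beta}$, and this is where the tangency condition $\delta'(\beta)=0$ is essential.
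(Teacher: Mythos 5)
Your proposal is correct and follows essentially the same route as the paper: you show the free boundary is tangent to $\zeta$ at $\tilde{\beta}$, use $Du=Dd_{p}$ (constant) along that piece of $\Gamma$ together with smoothness/harmonic continuation across the analytic arc to get $DD_{\zeta}u(\tilde{\beta})=0$, conclude the zero set of the continued $D_{\zeta}u$ has at least four arcs at $\tilde{\beta}$ so one enters $E$, extend it as in Lemma \ref{lem: level curves}, and exclude $\lambda_{0}$ as the other endpoint by the maximum-principle argument. The only differences are cosmetic (working with $\xi$ instead of $\nu$, the redundant reflection of $D_{\xi}u-C$, and launching the maximal curve from a nearby regular point $x_{0}$ rather than from $\tilde{\beta}$ itself).
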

\begin{proof}
The fact that $\gamma(\infty)$ can not belong to $\lambda_{0}$,
or $\gamma$ does not intersect itself, is a consequence of the strong
maximum principle as argued before. Now let us show the existence
of such a level curve. When $\epsilon>0$ is small enough, as $\beta$
is a strict local maximum, we have 
\[
(\beta\pm\epsilon,\rho_{1}(\beta\pm\epsilon)+\rho_{2})+\delta(\beta)(\mu_{1},\mu_{2})\in E.
\]
Thus the line 
\[
t\mapsto(\beta+\delta(\beta)\mu_{1}+t\,,\,\rho_{1}\beta+\rho_{2}+\delta(\beta)\mu_{2}+\rho_{1}t)
\]
is tangent to the free boundary at $\tilde{\beta}$. Hence the unit
vector tangent to the free boundary at $\tilde{\beta}$ is $\zeta$. 

Now, since $Du=Dd_{p}=\nu$ on the free boundary, we have $D_{\nu}u=|\nu|^{2}$
there. As $\nu$ is constant along $\lambda$, the derivative of $D_{\nu}u$
vanishes along the free boundary containing $\tilde{\beta}$. The
same is true about the derivative of $D_{\zeta}u$ along that part
of the free boundary, as $D_{\zeta}u$ is constant zero there. Therefore
we have 
\begin{eqnarray*}
 &  & D_{\zeta}D_{\zeta}u(\tilde{\beta})=0\\
 &  & D_{\zeta}D_{\nu}u(\tilde{\beta})=0.
\end{eqnarray*}
Note that as this part of the free boundary is smooth, $u$ is also
smooth along it, since it equals the smooth function $d_{p}$ along
it, and satisfies $\Delta u=-\eta$ in $E$. Consequently, $D^{2}u$
converges the correct limit as we approach the free boundary through
points inside $E$.

But $D_{\zeta}u$ has harmonic continuation in a neighborhood of $\tilde{\beta}$,
so if $x_{i}\in E$ converge to $\tilde{\beta}$, we have 
\[
D_{\nu}D_{\zeta}u(\tilde{\beta})=\lim D_{\nu}D_{\zeta}u(x_{i})=\lim D_{\zeta}D_{\nu}u(x_{i})=D_{\zeta}D_{\nu}u(\tilde{\beta})=0.
\]
Thus $DD_{\zeta}u(\tilde{\beta})=0$. Hence the level set of $D_{\zeta}u$
at $\tilde{\beta}$ must be the union of at least four arcs emanating
from $\tilde{\beta}$ making equal angles with each other. Thus, there
is at least one level curve starting at $\tilde{\beta}$ that remains
in $E$. Now, similarly to Lemma \ref{lem: level curves}, we can
extend this level curve until it hits $\partial E$.
\end{proof}

\begin{rem}
The conclusion of the above lemma is also true when $\beta$ is a
point of local minimum with $\delta(\beta)>0$.
\end{rem}
Now we state our main result in this section. Remember that $k$ is
given by (\ref{eq: k}).
\begin{thm}
Each $N_{j}$ is finite and 
\[
N:=\overset{\tau}{\underset{j=1}{\sum}}N_{j}\le k.
\]
\end{thm}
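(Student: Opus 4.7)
The goal is to inject the local maxima $\tilde\beta_1,\dots,\tilde\beta_N$ of $\delta$ on $\Gamma_\lambda$ into the set of positive-to-negative sign changes of $\nu\cdot\zeta$ on $[s_1,L]$; finiteness of each $N_j$ is immediate from the analyticity of the free boundary, so only the bound $N\le k$ requires work. For each $\tilde\beta_j$, Lemma~\ref{lem: level curve from max} supplies a simple level curve $\gamma_j\subset E$ of $\{D_\zeta u=0\}$ with $\gamma_j(-\infty)=\tilde\beta_j$ and $\gamma_j(\infty)=:p_j\in\Lambda_0-\lambda_0$. The strong maximum principle in the simply connected domain $E$ (as exploited in Lemma~\ref{lem: finite level curves}) forces the $\gamma_j$ to be pairwise non-crossing, and it forces the $p_j$ to lie in pairwise distinct flat points or flat intervals of $\Lambda_0$.

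The heart of the argument is pinning down the sign of $D_\zeta u$ on the two sides of $\gamma_j$. Using $v:=u-d_p$, which satisfies $v=Dv=0$ on $\Gamma_\lambda$ and $\Delta v=-\eta$ in $E$, a short Taylor expansion at $\tilde\beta_j$ in coordinates where $\zeta=(1,0)$ and the inward normal to $\lambda$ is $(0,1)$ yields
\[
D_\zeta u = D_\zeta v \;\approx\; \eta\,\delta''(\beta_j)\,(x_1-\beta_j)\bigl(x_2-\delta(\beta_j)\bigr)
\]
to leading order; since $\delta''(\beta_j)<0$, $D_\zeta u>0$ on the $-\zeta$ side of $\gamma_j$ and $D_\zeta u<0$ on the $+\zeta$ side. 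Orient $\partial E$ so that $E$ lies on its right---because $U$ lies above $\lambda$, this matches the direction of the parametrization $f$ on $[s_1,L]$. Starting at $\tilde\beta_j$ and following $\partial E$ in this direction, one first goes in $-\zeta$ along $\Gamma_j$, entering the $\{D_\zeta u>0\}$ side of $\gamma_j$; continuing through the remainder of $\lambda_0$ (where $D_\zeta u\equiv 0$), through $V_1$, and along $\Lambda_0$ until reaching $p_j$, one stays on that positive side. Hence on $\Lambda_0$ the arc just before $p_j$ sits in $\overline{\{D_\zeta u>0\}}$ while the arc just beyond sits in $\overline{\{D_\zeta u<0\}}$; since $D_\zeta u=\nu\cdot\zeta$ on $\Lambda_0$, the point $p_j$ witnesses a $+\!\to\!-$ transition of $\nu\cdot\zeta$ in the $s$-increasing direction. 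Combined with the distinctness of the flat pieces holding the $p_j$, this gives the desired injection, and therefore $N=\sum_j N_j\le k$.

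\textbf{Main obstacle.} The delicate step is the global sign-tracking claim: that the clockwise arc of $\partial E$ from $\tilde\beta_j$ to $p_j$ stays on the positive side of $\gamma_j$ even as it passes through intermediate flat points on $\Lambda$ and endpoints of other level curves produced by Lemma~\ref{lem: level curves}. Ruling out sign flips along the way relies on the non-crossing of level curves inside the simply connected $E$, on the observation that $\lambda_0$ is contained in $\{D_\zeta u=0\}$ (so the two sides of each $\gamma_j$ are coherently distinguished over the whole of $\lambda_0$), and on the injectivity of $j\mapsto p_j$ into the set of flat pieces. Together these let the side of $\gamma_j$ be tracked unambiguously from $\tilde\beta_j$ around to $p_j$, pinning the sign change at $p_j$ to $+\!\to\!-$ rather than $-\!\to\!+$.
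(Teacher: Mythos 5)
Your overall skeleton (one level curve $\gamma_j$ per strict local maximum, landing at a point $p_j\in\Lambda_0-\lambda_0$, each landing point forced to witness a $+\to-$ transition of $\nu\cdot\zeta$, hence $N\le k$) is in the spirit of the paper, and your finiteness remark and the distinctness of the landing sites are fine. But the step you yourself flag as the ``main obstacle'' is a genuine gap, and the justification you sketch does not close it. Knowing which topological side of $\gamma_j$ a boundary arc lies on tells you nothing about the sign of $D_{\zeta}u$ there: each component of $E-\gamma_j$ contains $\lambda_0$, other flat pieces, other level curves and other landing points $p_{j'}$, so $D_{\zeta}u$ takes both signs on each side. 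To convert ``side of $\gamma_j$'' into ``sign of $D_{\zeta}u$ just before/after $p_j$'' you would need the sign of $D_{\zeta}u$ immediately adjacent to $\gamma_j$ to be constant from $\tilde\beta_j$ all the way to $p_j$; this can fail, because zeros of $DD_{\zeta}u$ may lie on $\gamma_j$, and at such a branch point the sectors of $\{D_{\zeta}u>0\}$ and $\{D_{\zeta}u<0\}$ alternate, so the sign adjacent to a fixed side of $\gamma_j$ can flip. Two further holes in the local input: your Taylor expansion presumes $\delta''(\beta_j)<0$, but an analytic strict local maximum can be degenerate ($\delta''(\beta_j)=0$), in which case the zero set at $\tilde\beta_j$ has more than four arcs and your sign pattern near $\tilde\beta_j$ is not determined; and at $p_j$ you implicitly assume $D_{\zeta}u$ has harmonic continuation, which can fail where plastic components accumulate (the paper's Lemma \ref{lem: level curves} has to dodge exactly this).

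The paper pins the signs by an entirely different mechanism that avoids tracking anything along $\gamma_j$. It first removes the finitely many level curves with both endpoints on $\Lambda_0$ (and the pieces hanging off them) to form the subdomain $E_1$ attached to $\lambda_0$ — this is what guarantees that sign changes detected on $\partial E_1$ actually occur on $\Lambda_0$ — and then cuts $E_1$ along the curves $\gamma_1,\dots,\gamma_N$ into regions $\mathcal{D}_i$. Inside each $\mathcal{D}_i$ it takes the level curve of $\{D_{\zeta}u=0\}$ closest to $\gamma_{i-1}$ and applies the maximum principle on the enclosed subdomain: if $D_{\zeta}u$ had the wrong sign there, then $D_{\zeta}(d_p-u)<0$ near $\lambda_0$ would force $\delta$ to be strictly increasing immediately to the right of $\beta_{i-1}$, contradicting strict local maximality. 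This yields $D_{\zeta}u<0$ just to the right of $\tau_{i-1}$ and $D_{\zeta}u>0$ just to the left of $\tau_i$ on $\Lambda_1$, giving the $N$ transitions $+\to-$ along $\Lambda_0$, and then Lemma \ref{lem: signs} converts these into sign changes of $\nu\cdot\zeta$. If you want to salvage your route, you essentially have to replace your ``global sign-tracking'' by this kind of domain-decomposition-plus-maximum-principle argument; as written, the proposal asserts the key sign assignment rather than proving it.
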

\begin{proof}
Note that Lemmata \ref{lem: finite level curves} and \ref{lem: level curve from max}
imply that each $N_{j}$ is finite. Because no level curve can have
both its endpoints on $\lambda_{0}$, as otherwise we have $D_{\zeta}u\equiv0$. 

Now consider the finite set of level curves $\tilde{\gamma}_{i}$
given by Lemma \ref{lem: level curves}, that have both their endpoints
on $\Lambda_{0}$. Let $\hat{\gamma}_{j}$'s be the parts of the other
level curves that have both endpoints on $\tilde{\gamma}_{i}$'s,
or one endpoint at them and the other one on $\Lambda_{0}$. Note
that two level curves can not intersect at more than one point. Thus
the number of $\hat{\gamma}_{j}$'s is finite. Also note that two
level curves with one endpoint on $\lambda_{0}$ can not intersect.

Denote by $E_{1}$ the component of $E-\{\tilde{\gamma}_{i},\hat{\gamma}_{j}\}$
which is attached to $\lambda_{0}$. The boundary of $E_{1}$ consists
of $\lambda_{0}$ and part of $\Lambda_{0}$ together with parts of
some $\tilde{\gamma}_{i}$'s and $\hat{\gamma}_{j}$'s. Let 
\[
\Lambda_{1}:=\overline{\partial E_{1}-\lambda_{0}}.
\]
Note that by our construction, any level curve in $E_{1}$ given by
Lemma \ref{lem: level curves} must have one endpoint on $\lambda_{0}$.
Let $\gamma_{1},\cdots\gamma_{N}$ be the level curves given by Lemma
\ref{lem: level curve from max}, numbered as we move from $V_{1}$
to $V_{0}$. Then one endpoint of each $\gamma_{i}$ is a strict local
maximum point on the tilted graph of $\delta$ over $\lambda$ which
we call it $\beta_{i}$, and the other endpoint is on $\Lambda_{1}$
which we call it $\tau_{i}$. Let $\mathcal{D}_{1},\cdots,\mathcal{D}_{N+1}$
be the components of $E_{1}-\{\gamma_{i}\}$. Note that $\bar{\gamma}_{i}\cap\bar{\gamma}_{j}=\emptyset$
when $i\ne j$.

Consider $\mathcal{D}_{i}$, whose boundary consists of $\gamma_{i-1},\gamma_{i}$
and parts of $\lambda_{0},\Lambda_{1}$, which we denote the latter
two by $\lambda_{0i},\Lambda_{1i}$. Note that $\gamma_{0},\gamma_{N+1}$
are empty. Suppose $1<i<N+1$. First we claim that $D_{\zeta}u$ must
change sign along $\Lambda_{1i}$. Otherwise we have for example $D_{\zeta}u\ge0$
there. As $D_{\zeta}u$ vanishes on the other parts of $\partial\mathcal{D}_{i}$,
maximum principle implies 
\[
D_{\zeta}u>0\qquad\textrm{ in }\mathcal{D}_{i}.
\]
Since near $\lambda_{0}$ we have $D_{\zeta}d_{p}=0$, we get 
\[
D_{\zeta}(d_{p}-u)<0
\]
near $\lambda_{0}$ in $\mathcal{D}_{i}$. This implies that $\delta$
is strictly increasing along the subset of $\lambda_{0i}$ over which
$\delta>0$. To see this, just look at the behavior of $d_{p}-u$
on segments in the $\zeta$ direction starting on the free boundary.
Hence we get a contradiction with $\beta_{i-1}$ being a strict local
maximum. Note that this argument also shows that $D_{\zeta}u$ must
be positive on part of $\Lambda_{11}$, and negative on part of $\Lambda_{1N+1}$.

Let $i\ne1,N+1$. Then consider the finite set of level curves of
$D_{\zeta}u=0$ in $\mathcal{D}_{i}$. These level curves have one
endpoint on $\lambda_{0i}$ and one endpoint on $\Lambda_{1i}$, and
do not intersect each other. Consider the one closest to $\gamma_{i-1}$,
and let $\tilde{\mathcal{D}}$ be the subdomain of $\mathcal{D}_{i}$
that they enclose. Then $D_{\zeta}u$ must have one sign on $\tilde{\mathcal{D}}$,
since it can not vanish there, as there is no further level curve
inside $\tilde{\mathcal{D}}$. Thus we must have $D_{\zeta}u<0$ on
$\tilde{\mathcal{D}}$. Since otherwise we get as before that $\delta$
is strictly increasing near and on the right of $\beta_{i-1}$, contradicting
the fact that it is a local maximum. Hence $D_{\zeta}u$ must be negative
on some part of $\Lambda_{1}$ near and on the right of $\tau_{i-1}$.
Similarly, $D_{\zeta}u$ must be positive on some part of $\Lambda_{1}$
near and on the left of $\tau_{i}$.

Therefore, $D_{\zeta}u$ must change sign from positive to negative
along $\Lambda_{1}$ at least $N$ times. Finally note that as $D_{\zeta}u$
vanishes on $\tilde{\gamma}_{i}$'s and $\hat{\gamma}_{j}$'s, these
sign changes are actually happening along $\Lambda_{0}$. Thus by
Lemma \ref{lem: signs} we get the desired result.
\end{proof}
We immediately get the following
\begin{thm}
\label{thm: polygons}Suppose $U$ is a convex polygon. Then for any
side $S_{j}$ there is at most one plastic loop attached to it. Furthermore,
the plastic loop is given by 
\[
\{f(s)+t\mu(s)\,\mid\, s\in(a_{j},b_{j})\,,\, t\in(0,\delta(s))\},
\]
where $s_{j-1}<a_{j}<b_{j}<s_{j}$. Also there is $c_{j}\in(a_{j},b_{j})$
such that $\delta(s)$ is strictly increasing for $s\in(a_{j},c_{j})$
and strictly decreasing for $s\in(c_{j},b_{j})$.\end{thm}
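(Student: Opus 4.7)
The plan is to apply the preceding theorem to each side $S_j$ of the polygon in turn, after relabeling so that $S_j$ plays the role of $\lambda$. The main task is then to show that the sign-change invariant $k$ of \eqref{eq: k} satisfies $k\le 1$ for every side of a convex polygon, which immediately gives at most one plastic loop and a single strict local maximum of $\delta$ on that loop. The remaining conclusions (the interior location $s_{j-1}<a_j<b_j<s_j$ and the strict monotonicity) follow from the polygon structure together with analyticity of the free boundary attached to a line segment.

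To compute $k$: along each side of the polygon $\nu$ is constant, so $\nu\cdot\zeta$ is piecewise constant in $s$ and Assumption~\ref{assu: 2} holds trivially. Traversing $\partial U\setminus S_j$ from one endpoint of $S_j$ back to the other, the inward normal rotates monotonically by the sum of the exterior angles at the intermediate vertices, which for a convex polygon is strictly less than the full $2\pi$. Since $\nu$ on $S_j$ is perpendicular to $\zeta$, on each remaining side $\nu\cdot\zeta$ equals (up to a sign depending only on orientation) the sine of the cumulative rotation from $S_j$; this cumulative rotation starts in $(0,\pi)$ on the side adjacent to the first endpoint of $S_j$ and ends in $(\pi,2\pi)$ on the side adjacent to the other endpoint, so the sine takes positive values followed by negative values and changes sign from positive to negative exactly once. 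Hence $k=1$, and the preceding theorem gives $N\le 1$.

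Because $\delta$ is continuous and nonnegative, vanishes at the endpoints of every plastic component and is positive in its interior, each plastic component carries at least one strict local maximum (strictness coming from analyticity of the tilted graph). Thus the number of plastic components attached to $S_j$ is at most $N\le 1$, and if such a loop exists it has a unique strict local maximum at some $c_j$. The endpoints of $S_j$ are nonreentrant corners with elastic neighborhoods, as recalled in the introduction, so $\delta\equiv 0$ near them; this gives $s_{j-1}<a_j$ and $b_j<s_j$.

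Finally, strict monotonicity on either side of $c_j$ follows from analyticity. On $S_j$ the parametrization $f$ is affine and $\mu$ is constant, so analyticity of the free boundary (as smooth as the tangent bundle of $S_j$, i.e., analytic) transfers to $\delta$ on $(a_j,b_j)$, and the critical points of $\delta$ there are isolated. A local minimum of $\delta$ in $(a_j,c_j)$ would force a second strict local maximum between $a_j$ and that minimum, since $\delta(a_j)=0$ lies strictly below the minimum value, contradicting $N_j=1$. Hence the only interior critical points of $\delta$ in $(a_j,c_j)$ are inflection-type zeros through which $\delta'$ does not change sign, and so $\delta$ is strictly increasing on $(a_j,c_j)$; the argument on $(c_j,b_j)$ is symmetric. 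The main obstacle is the computation of $k$: correctly relating the cumulative rotation of $\nu$ to the sign of $\nu\cdot\zeta$ and using convexity to keep the total rotation strictly below $2\pi$; the other steps are routine consequences of results already in the paper.
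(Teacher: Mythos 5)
Your proposal is correct and takes essentially the same route as the paper: apply the bound $N\le k$ from the preceding theorem with $k=1$ (the paper obtains $k=1$ from the observation that $\nu\cdot\zeta_{j}$ vanishes on at most one side other than $S_{j}$; your monotone-rotation-of-the-normal argument is just a fuller version of the same convexity fact), and then deduce uniqueness of the plastic loop, its location away from the corners, and the strict unimodality of $\delta$ from analyticity of the tilted graph of $\delta$. You merely supply details (the orientation and sign bookkeeping for $\nu\cdot\zeta$, and the no-interior-local-minimum argument) that the paper's very brief proof leaves implicit.
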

\begin{proof}
Let $\zeta_{j}$ be the unit vector in the $S_{j}$ direction. We
only need to notice that since $U$ is a convex polygon, $\nu\cdot\zeta_{j}$
is zero on at most one $S_{i}$ for $i\ne j$. Thus it changes sign
from positive to negative exactly once, and we have $k=1$. The second
part of the theorem follows from analyticity of the tilted graph of
$\delta$.
\end{proof}

\section{Reflection Method}

In this section we give an example of how to apply the reflection
method in \citet{MR563207} to our problem. Let $U$ be the rectangle
\[
\{(x_{1},x_{2})\,\mid\,|x_{1}|<a\,,\,|x_{2}|<b\}.
\]
By Theorem \ref{thm: polygons}, symmetry of $\gamma_{p}$, and symmetry
of $U$, there are four plastic components 
\begin{eqnarray*}
 &  & P_{1}:\;|x_{1}|\le\alpha\;,\;-b\le x_{2}\le-b+\phi(x_{1}),\\
 &  & P_{2}:\;|x_{2}|\le\beta\;,\;-a\le x_{1}\le-a+\psi(x_{1}),\\
 &  & P_{3}:\;\textrm{the reflection of \ensuremath{P_{1}}with respect to the \ensuremath{x_{1}}axis},\\
 &  & P_{4}:\;\textrm{the reflection of \ensuremath{P_{2}}with respect to the \ensuremath{x_{2}}axis}.
\end{eqnarray*}
Here $\phi,\psi$ are even functions. Let $\rho$ be the reflection
with respect to the bisector of $\partial U$ at $(-a,-b)$, i.e.
\[
x_{2}=x_{1}+a-b.
\]
Thus 
\[
\rho(x_{1},x_{2})=(x_{2}-a+b,x_{1}+a-b).
\]

\begin{thm}
If $b<a$, then $\rho(P_{2})\subset P_{1}$.\end{thm}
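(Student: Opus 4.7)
My plan is a Caffarelli--Friedman style reflection-symmetrization, carried out variationally. Shift coordinates so the corner $(-a,-b)$ is the origin; then $U$ becomes $(0,2a)\times(0,2b)$ and $\rho$ becomes the coordinate swap $(y_{1},y_{2})\mapsto(y_{2},y_{1})$. The set $D:=U\cap\rho^{-1}(U)=(0,2b)^{2}$ is $\rho$-invariant; split it along the diagonal $\Delta=\{y_{1}=y_{2}\}$ into $D_{-}=\{y_{2}<y_{1}\}\cap D$ (containing $P_{1}$) and $D_{+}=\{y_{1}<y_{2}\}\cap D$ (containing $P_{2}$).

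The geometric input from $b<a$ is the monotonicity $d_{p}(\rho(y),\partial U)\leq d_{p}(y,\partial U)$ for all $y\in D_{-}$. Since $U$ is a rectangle, $d_{p}(y,\partial U)=\min(y_{1},y_{2},2a-y_{1},2b-y_{2})$, independent of $p$. For $y\in D_{-}$ we have $y_{1}\geq y_{2}$ and $a\geq b$, so $\min(2a-y_{1},2b-y_{2})\geq 2b-y_{1}\geq\min(2a-y_{2},2b-y_{1})$; together with the common term $\min(y_{1},y_{2})=y_{2}$ this yields the claim.

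Now define the symmetrized competitor
\[
u^{\sharp}:=\begin{cases}\max(u,u\circ\rho) & \text{on }D_{-},\\ \min(u,u\circ\rho) & \text{on }D_{+},\\ u & \text{on }U\setminus D.\end{cases}
\]
I would check that $u^{\sharp}\in\tilde{K}$: continuity across the interior edge $\{y_{1}=2b\}\cap U$ of $\partial D$ is automatic because $\rho$ sends this edge to the top of $\partial U$ (where $u=0$), so $u\circ\rho=0$ there and $u^{\sharp}=u$ from both sides; the boundary trace on $\partial U$ vanishes by a similar case-by-case check; the gradient constraint $\gamma_{q}(Du^{\sharp})\leq 1$ follows because $\gamma_{q}$ is swap-symmetric and $\min/\max$ preserve the constraint pointwise; the obstacle inequality $u^{\sharp}\leq d_{p}$ reduces on $D_{-}$ to $(u\circ\rho)(y)\leq d_{p}(\rho(y),\partial U)\leq d_{p}(y,\partial U)$ via the monotonicity lemma. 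A Lebesgue-isometric change of variables $y=\rho(x)$ exchanges the integrals over $\{u>u\circ\rho\}\cap D_{+}$ and its $\rho$-image $\{u<u\circ\rho\}\cap D_{-}$, giving $I(u^{\sharp})=I(u)$. Strict convexity of $I$ on $\tilde{K}$ forces $u^{\sharp}=u$, so in particular $u\geq u\circ\rho$ on $D_{-}$.

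For the conclusion, let $x\in P_{2}$ and $y:=\rho(x)$. A short computation shows $y\in D$ and that $y\in D_{-}$ (because $P_{2}$ is a thin sliver against the left side, its reflection lies below the diagonal in $D$); hence
\[
y_{2}=x_{1}+a=d_{p}(x,\partial U)=u(x)=(u\circ\rho)(y)\leq u(y)\leq d_{p}(y,\partial U).
\]
Since $y_{2}$ is one of the four candidates in $d_{p}(y,\partial U)=\min(y_{1},y_{2},2a-y_{1},2b-y_{2})$, we also have $d_{p}(y,\partial U)\leq y_{2}$, so $u(y)=d_{p}(y,\partial U)=y_{2}$. Thus $y\in P$ with $p$-closest boundary point on the bottom side, and Theorem \ref{thm: polygons} (at most one plastic component per side) yields $y\in P_{1}$, i.e.\ $\rho(P_{2})\subset P_{1}$. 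The main obstacle is the verification that $u^{\sharp}\in\tilde{K}$, especially the obstacle inequality on $D_{-}$, which is precisely where $b<a$ enters through the $d_{p}$-monotonicity; everything else is standard variational and geometric bookkeeping.
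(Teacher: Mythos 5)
Your proof is correct, but it takes a genuinely different route from the paper. The paper argues by the maximum principle: it sets $w:=u\circ\rho-u$ on the elastic subdomain $\mathcal{D}=E\cap\{x_{2}<x_{1}+a-b,\,-a<x_{1}<-a+2b\}$, uses $\Delta u\ge-\eta$ a.e.\ and $\Delta u=-\eta$ in $E$ to get $\Delta w\ge0$, checks $w\le0$ on each piece of $\partial\mathcal{D}$ (the lines $x_{2}=-b$, $x_{2}=x_{1}+a-b$, $x_{1}=-a+2b$, and the free boundaries $\Gamma_{1},\Gamma_{3},\Gamma_{4}$, using the same $d_{p}$-monotonicity under $\rho$ that you prove), concludes $w<0$ in $\mathcal{D}$ by the strong maximum principle, and then derives a contradiction with $u\le d_{p}$ for a hypothetical $x\in P_{2}$ with $\rho(x)\notin P_{1}$. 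You instead obtain the comparison $u\circ\rho\le u$ on $D_{-}$ variationally, via the polarized competitor $u^{\sharp}$, the exact energy identity $I(u^{\sharp})=I(u)$ (the modified sets $\{u<u\circ\rho\}\cap D_{-}$ and $\{u>u\circ\rho\}\cap D_{+}$ are exchanged by the measure-preserving involution $\rho$), and uniqueness of the minimizer; the endgame ($u(y)=d_{p}(y)$ equal to the distance to the bottom side, hence $y$ plastic with $p$-closest point on the bottom, hence $y\in P_{1}$ by uniqueness of the plastic component per side) is sound, using that plastic points are off the ridge and corners are never $p$-closest points. What your approach buys: it needs no PDE input beyond $u\ge0$ and uniqueness of the minimizer — no subharmonicity of $w$, no regularity of, or sign checks on, the free boundaries — it gives the inequality on all of $D_{-}$ (plastic parts included), and it generalizes to other swap-symmetric convex functionals; the paper's route stays within the classical Caffarelli--Friedman reflection technique and yields the strict inequality $w<0$ in the elastic subdomain. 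Two small points you should make explicit: first, $b<a$ enters not only through the $d_{p}$-monotonicity (which you could even bypass by checking membership in $K$ via the gradient constraint) but, crucially, through the geometry of $D=(0,2b)^{2}$: the interior edge $\{y_{1}=2b\}$ of $\partial D$ abuts $D_{-}$, where the max is taken, so the gluing is continuous since $\max(u,0)=u$; if $b>a$ the interior edge would abut the min side and $u^{\sharp}$ would fail to be in $H^{1}_{0}(U)$, which is exactly why the inclusion reverses. Second, your ``short computation'' that $\rho(P_{2}\cap U)\subset D_{-}$ should be justified by noting that points of $P_{2}$ have their unique $p$-closest boundary point on the left side, hence lie strictly above the corner bisector; the base points of $P_{2}$ on $\partial U$ are then handled by closedness of $P_{1}$ (the same trivial closure step is implicit in the paper's proof).
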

\begin{proof}
Let 
\[
\mathcal{D}:=E\cap\{(x_{1},x_{2})\,\mid\, x_{2}<x_{1}+a-b\;,\;-a<x_{1}<-a+2b\}.
\]
Consider the function 
\[
w(x):=u(\rho(x))-u(x)
\]
in $\mathcal{D}$. Since $\Delta u\ge-\eta$, and $\Delta u(x)=-\eta$
for $x\in E$, we have 
\[
\Delta w\ge0
\]
in $\mathcal{D}$, noting that Laplacian is invariant under reflections.

The boundary of $\mathcal{\mathcal{D}}$ consists of parts of the
lines $x_{2}=-b$, $x_{2}=x_{1}+a-b$, $x_{1}=-a+2b$, and parts of
$\Gamma_{1},\Gamma_{3},\Gamma_{4}$. Here $\Gamma_{i}$ is the free
boundary attached to $P_{i}$. Note that some of these parts can be
empty. Also note that $\Gamma_{2}$ is on the other side of the line
$x_{2}=x_{1}+a-b$, so it does not intersect $\partial\mathcal{\mathcal{D}}$.

Since $u$ vanishes on $\partial U$, and $\rho$ takes $x_{2}=-b$
to $x_{1}=-a$, $w=0$ on it. The same is true on the line $x_{2}=x_{1}+a-b$,
as it is fixed by $\rho$. Also as $\rho$ takes $x_{1}=-a+2b$ to
$x_{2}=b$, for $x$ on it we have 
\[
w(x)=0-u(x)\le0.
\]
If $x\in\Gamma_{1}$ then $u(x)=d_{p}(x)$. But $d_{p}(\rho(x))\le d_{p}(x)$,
since due to the symmetry of $\gamma_{p}$, $\rho(x)$ has the same
$p$-distance to $x_{1}=-a$ as $x$ has to $x_{2}=-b$. Thus 
\[
w(x)=u(\rho(x))-d_{p}(x)\le u(\rho(x))-d_{p}(\rho(x))\le0.
\]
We can argue similarly when $x\in\Gamma_{3}$, noting that $\rho$
decreases the $p$-distance to $x_{2}=b$ over $\mathcal{D}$. Finally
when $x\in\Gamma_{4}$, we get the same result noting that the $p$-distance
of $\rho(x)$ to $x_{2}=b$ is less than the $p$-distance of $x$
to $x_{1}=a$, when $x\in\mathcal{D}$.

Therefore, by the strong maximum principle 
\begin{eqnarray*}
w(x)<0 &  & x\in\mathcal{D}.
\end{eqnarray*}
Note that if $w\equiv0$ on $\bar{\mathcal{D}}$, then we must have
$u=0$ on $x_{1}=-a+2b$ inside $U$, which is impossible.

Now suppose there is $x\in P_{2}$ such that $\rho(x)\notin P_{1}$.
Then $\rho(x)\in\mathcal{D}$. Thus 
\[
w(\rho(x))=u(x)-u(\rho(x))=d_{p}(x)-u(\rho(x)).
\]
But $d_{p}(\rho(x))\le d_{p}(x)$ as the $p$-distance of $\rho(x)$
to $x_{2}=-b$ equals the $p$-distance of $x$ to $x_{1}=-a$. Hence
\[
0>w(\rho(x))\ge d_{p}(\rho(x))-u(\rho(x)),
\]
which contradicts $u\le d_{p}$.\end{proof}
\begin{rem}
Since $\gamma_{p}$ is not invariant under arbitrary reflections,
the more general results proved in \citet{MR563207} using reflections
does not necessarily hold here. Although some special cases can be
proved similar to the above, for example when a bisector of a triangle
is parallel to one of the coordinate axes.
\end{rem}

\bibliographystyle{plainnat}
\bibliography{New-Paper-Bibliography}

\end{document}